\newtheorem{theorem}{Theorem}[section]
\newtheorem{corollary}[theorem]{Corollary}
\newtheorem{definition}[theorem]{Definition}
\newtheorem{example}[theorem]{Example}
\newtheorem{lemma}[theorem]{Lemma}
\newtheorem{proposition}[theorem]{Proposition}
\numberwithin{equation}{section}
\begin{document}

\title{Generalized uniform covering maps relative to subgroups}

\author{B. LaBuz}

\address{Saint Francis University, 114 Sullivan Building, Loretto, PA 15940, blabuz@@francis.edu, 814-472-3378}

\begin{abstract}
In ``Rips complexes and covers in the uniform category'' \cite{Rips} the authors define, following James \cite{J}, covering maps of uniform spaces and introduce the concept of generalized uniform covering maps. Conditions for the existence of universal uniform covering maps and generalized uniform covering maps are given. This paper extends these results by investigating the existence of these covering maps relative to subgroups of the uniform fundamental group and the fundamental group of the base space.
\end{abstract}

\maketitle
\tableofcontents

\medskip %Printed on \today.
\medskip

\section{Introduction}

In \cite{Rips}, a theory of uniform covering maps and generalized uniform covering maps for uniform spaces is developed. In particular, it is shown that a locally uniform joinable chain connected space has a universal generalized uniform covering space and a path connected, uniformly locally path connected, and uniformly semilocally simply connected space has a universal uniform covering space. This paper extends these results to match the corresponding result in the classical setting. In this setting, one not only has a theorem about the existence of a universal covering space but a theorem about the existence of a covering space corresponding to any subgroup of the fundamental group of the base space.

A good source for basic facts about uniform spaces is \cite{B}. Let us recall some definitions and results from \cite{Rips}. Given a function $f:X\to Y$ with $X$ a uniform space, the function \textbf{generates a uniform structure} on $Y$ if the family $\{f(E):E \mathrm{\ is\ an\ entourage\ of\ } X\}$ forms a basis for a uniform structure on $Y$. If $Y$ already has a uniform structure, the function generates that structure if and only if it is uniformly continuous and the image of every entourage of $X$ is an entourage of $Y$. Given an entourage $E$ of $X$, an $\bm{E}$\textbf{-chain} in $X$ is a finite sequence $x_1,\ldots ,x_n$ such that $(x_i,x_{i+1})\in E$ for each $i\leq n$. Inverses and concatenations of $E$-chains are defined in the obvious way. $X$ is \textbf{chain connected} if for each entourage $E$ of $X$ and any $x,y\in X$ there is an $E$-chain starting at $x$ and ending at $y$. A function $f:X\to Y$ from a uniform space $X$ has \textbf{chain lifting} if for every entourage $E$ of $X$ there is an entourage $F$ of $X$ so that for any $x\in X$, any $f(F)$-chain in $Y$ starting at $f(x)$ can be lifted to an $E$-chain in $X$ starting at $x$. The function $f$ has \textbf{uniqueness of chain lifts} if for every entourage $E$ of $X$ there is an entourage $F\subset E$ so that any two $F$-chains in $X$ starting at the same point with identical images must be equal. Showing that $f$ has unique chain lifting amounts to finding an entourage of $X$ that is \textbf{transverse} to $f$. An entourage $E_0$ is transverse to $f$ if for any $(x,y)\in E_0$ with $f(x)=f(y)$, we must have $x=y$. The function has \textbf{unique chain lifting} if it has both chain lifting and uniqueness of chain lifts.

Like in the setting of paths, we wish to have homotopies of chains. Homotopies between chains were successfully defined in \cite{BP}. The following is an equivalent definition from \cite{Rips} that relies on homotopies already defined for paths. It utilizes Rips complexes which are a fundamental tool for studying chains in a uniform space. Given an entourage $E$ of $X$ the Rips complex $R(X,E)$ is the subcomplex of the full complex over $X$ whose simplices are finite $E$-bounded subsets of $X$. Any $E$-chain $x_1,\ldots ,x_n$ determines a homotopy class of paths in $R(X,E)$. Simply join successive terms $x_i,x_{i+1}$ by an edge path, i.e., a path along the edge joining $x_i$ and $x_{i+1}$. Since only homotopy classes of paths will be considered any two such paths will be equivalent. Two $E$-chains starting at the same point $x$ and ending at the same point $y$ are $\bm{E}$\textbf{-homotopic relative endpoints} if the corresponding paths in $R(X,E)$ are homotopic relative endpoints.

We wish to consider finer and finer chains in a space and therefore come to the concept of generalized paths. A \textbf{generalized path} is a collection of homotopy classes of chains $\alpha=\{[\alpha_E]\}_E$ where $E$ runs over all entourages of $X$ and for any $F\subset E$, $\alpha_F$ is $E$-homotopic relative endpoints to $\alpha_E$. Inverses and concatenations of generalized paths are defined in the obvious way. The set of generalized paths in $X$ starting at $x_0$ is denoted as $GP(X,x_0)$. We will suppress the use of a basepoint and just write $GP(X)$. $GP(X)$ is given a uniform structure generated by basic entourages defined as follows. For each entourage $E$ of $X$ let $E^*$ be the set of all pairs $(\alpha,\beta)$, $\alpha,\beta \in GP(X,x_0)$, such that $\alpha_E^{-1}\beta_E$ is $E$-homotopic to the chain $x,y$ where $x$ is the endpoint of $\alpha$ and $y$ is the endpoint of $\beta$. Call such a generalized path $\bm{E}$\textbf{-short}.

Analogously to the way the fundamental group is defined, define the group $\check\pi_1(X,x_0)$ to be the group of all generalized loops based at $x_0$. It is isomorphic to $\varprojlim (\pi_1(R(X,E),x_0))$. Also, as the notation suggests, $\check{\pi}_1(X,x_0)$ is isomorphic to the first shape group for metric compacta.

Rips complexes are used in the definition for uniform covering maps in \cite{Rips}. Let us use a formulation that is proved equivalent in that paper. A function $f:X\to Y$ is a \textbf{uniform covering map} if it generates the uniform structure on $Y$ and has unique chain lifting. 

More definitions are needed in order to define a generalized uniform covering map. Suppose $f:X\to Y$ is a function between uniform spaces. This function has \textbf{approximate uniqueness of chain lifts} if for each entourage $E$ of $X$ there is an entourage $F\subset E$ such that any two $F$-chains that start at the same point and have identical images under $f$ are $E$-close. Two chains $x_1,\ldots,x_n$ and $y_1,\ldots,y_n$ are $\bm{E}$\textbf{-close} if $(x_i,y_i)\in E$ for each $i\leq n$. The function $f$ has \textbf{generalized path lifting} if for any $x\in X$, any generalized path starting at $f(x)$ lifts to a generalized path starting at $x$.

A map $f:X\to Y$ is a \textbf{generalized uniform covering map} if it generates the uniform structure on $Y$ and has chain lifting, approximate uniqueness of chain lifts, and generalized path lifting. The definition from \cite{Rips} includes a useful property that is later proved to be redundant. Given a uniform continuous map $f:X\to Y$ there is an induced function $f_*:GP(X)\to GP(Y)$ as described in \cite{Rips}.

\begin{lemma} \cite{Rips} \label{ShortGP_Lifts}
Suppose $f:X\to Y$ is a generalized uniform covering map. Then for each entourage $E$ of $X$ there is an entourage $F$ of $Y$ so that any two generalized paths in $X$ starting at the same point are $E^*$-close if their images under $f_*$ are $F^*$-close. In particular, $F$-short generalized paths in $Y$ lift to $E$-short generalized paths in $X$.
\end{lemma}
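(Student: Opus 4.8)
The plan is to reduce the statement to a single \emph{shortness-reflection} property for $f_*$ and then to lift a combinatorial nullhomotopy from $R(Y,F)$ to $R(X,E)$. First, observe that since $f_*$ preserves inverses and concatenations, two generalized paths $\alpha,\beta$ with a common initial point are $E^*$-close precisely when the generalized path $\alpha^{-1}\beta$ is $E$-short, and $f_*(\alpha^{-1}\beta)=(f_*\alpha)^{-1}(f_*\beta)$ is $F$-short precisely when $f_*\alpha,f_*\beta$ are $F^*$-close. Thus the whole lemma follows from the claim: for each entourage $E$ of $X$ there is an entourage $F$ of $Y$ so that whenever $\gamma\in GP(X)$ has $f_*\gamma$ $F$-short, $\gamma$ is itself $E$-short. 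The ``in particular'' is then immediate: given an $F$-short generalized path $\delta$ in $Y$ starting at $f(x)$, generalized path lifting produces a lift $\gamma$ starting at $x$ with $f_*\gamma=\delta$, and the claim makes $\gamma$ $E$-short.

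To prove the claim I would unwind $F$-shortness of $f_*\gamma$ as a statement about edge paths in the Rips complex: a representative $F$-chain of $f_*\gamma$, which may be taken to be $f(\gamma_{E'})$ for a sufficiently fine entourage $E'$ of $X$ with $f(E')\subset F$, is homotopic rel endpoints in $R(Y,F)$ to the two-point chain joining its endpoints. Such a homotopy factors as a finite sequence of \emph{elementary moves}: insertion or deletion of a spur, and replacement of an edge $[u,w]$ by a pair $[u,v],[v,w]$ across an $F$-bounded triple $\{u,v,w\}$ (a $2$-simplex of $R(Y,F)$). The goal is to lift each such move to an elementary move in $R(X,E)$, starting from the lift $\gamma_{E'}$ of $f(\gamma_{E'})$, thereby producing an $E$-homotopy rel endpoints from $\gamma_{E'}$ to a two-point chain; since $E$-homotopy rel endpoints fixes endpoints and $\gamma_{E'}$ runs from the initial point of $\gamma$ to its terminal point, that two-point chain is forced to be the chain of endpoints of $\gamma$, so $\gamma$ is $E$-short.

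The entourage $F$ is chosen by nesting. Given $E$, approximate uniqueness of chain lifts yields $E_1\subset E$ such that any two $E_1$-chains from a common point with the same image are $E$-close; chain lifting applied to $E_1$ yields an entourage $D$ of $X$ so that $f(D)$-chains lift to $E_1$-chains; and since $f$ generates the uniform structure on $Y$, $f(D)$ is an entourage of $Y$, from which $F$ is built (shrinking further as the bookkeeping of the moves requires). The heart of the argument is the triangle move: given the lift $u'$ of $u$ and the already-constructed lift $w'$ of $w$, one lifts the chains $u,v,w$ and $u,w$ from $u'$ using chain lifting, obtaining $E_1$-chains; approximate uniqueness then forces the two lifts of $w$ to be $E$-close, and the new vertex $v'$ to be $E$-close to both $u'$ and $w'$, so that $\{u',v',w'\}$ is an $E$-bounded triple, i.e.\ a $2$-simplex of $R(X,E)$, giving a legitimate elementary move upstairs.

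The main obstacle I anticipate is exactly this control of the lift: because $f$ is only assumed to have \emph{approximate} uniqueness of chain lifts rather than genuine uniqueness, independently lifted subchains need not agree on the nose but only up to $E$, so the lifted moves do not automatically glue into a strict homotopy. Managing this requires threading the discrepancies through $E$-bounded simplices -- choosing a nested family $E\supset E_1\supset E_2\supset\cdots$ and invoking approximate uniqueness repeatedly so that every gap introduced by a non-unique lift is absorbed as an edge of an $E$-simplex -- and, at the boundary, checking that the moves along the sides corresponding to the fixed endpoints keep the terminal vertex within $E$ of the genuine endpoint, which together with the rel-endpoints bookkeeping secures an honest $E$-homotopy to the endpoint chain. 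This is where the quantitative nesting must be done carefully; the remaining verifications (that spur moves and the passage through inverses and concatenations are lifted compatibly, and that the choice of representative level $E'$ is harmless by the compatibility condition defining generalized paths) I expect to be routine.
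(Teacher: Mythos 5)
A preliminary remark: the paper does not prove this lemma at all --- it is imported from \cite{Rips} --- so there is no in-paper argument to compare yours against; what follows assesses your proposal on its own terms. Your opening reduction is fine (indeed, $(\alpha,\beta)\in E^*$ if and only if $\alpha^{-1}\beta$ is $E$-short is essentially the definition of $E^*$), and lifting an edge-path homotopy in $R(Y,F)$ move by move is a natural strategy.

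The step you yourself identify as the heart of the argument is, however, where the proof breaks. Approximate uniqueness of chain lifts compares two chains that start at the same point and have \emph{identical} images under $f$. The two chains you feed it --- a lift of $u,v,w$ and a lift of $u,w$, both from $u'$ --- have images $u,v,w$ and $u,w$, which are different sequences (not even of the same length), so the hypothesis is not satisfied and you cannot conclude that the two lifts of $w$ are $E$-close. Comparing lifts of chains that are merely elementarily homotopic, rather than equal, is essentially the content of the lemma being proved, so invoking it here is close to circular. One can salvage the single triangle move differently: if $v'$ is produced by lifting the one edge $u,v$ from $u'$, then $(u',v')\in E_1$ and $(u',w')\in E_1$ force $(v',w')$ into the composite entourage $E_1\circ E_1$ (take $E_1$ symmetric with $E_1\circ E_1\subset E$), so $\{u',v',w'\}$ is $E$-bounded. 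But then the mesh of the lifted chain degrades with every move --- the new edge $(v',w')$ lies only in $E_1\circ E_1$, not in $E_1$ --- and since the number of elementary moves in the given $F$-homotopy is not bounded in advance, no single choice of $E_1$ survives the whole induction without a re-lifting and comparison step that your outline does not supply (and which again runs into the identical-images restriction). A related secondary gap: your lifted sequence of chains is not automatically a homotopy rel endpoints, since with only approximate uniqueness the terminal vertex can drift within the fiber over $f(y)$; the assertion that the final two-point chain ``is forced to be the chain of endpoints of $\gamma$'' therefore assumes what is to be shown. A more robust route is to replace ad hoc lifts in $X$ by the canonical unique lifts into the space of $E$-homotopy classes of $E$-chains (equivalently, into the universal cover of $R(X,E)$), where invariance of the terminal point under $E$-homotopy is automatic; that is the kind of machinery \cite{BP} and \cite{Rips} develop for exactly this purpose.
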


This property can be used to show that a generalized uniform covering map has uniqueness of generalized path lifts provided $X$ is Hausdorff. This result is proved for a special case in \cite{Rips}.

\begin{proposition} \label{UniqOfGenPathLifts}
Suppose $f:X\to Y$ is a generalized uniform covering map with $X$ Hausdorff. Then $f$ has uniqueness of generalized path lifts.
\end{proposition}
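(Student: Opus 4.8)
The plan is to reduce the statement to two facts: that equal images force the two lifts to be $E^*$-close at every scale, and that in a Hausdorff setting this closeness at every scale forces outright equality. Recall that $f$ having uniqueness of generalized path lifts means that whenever $\alpha,\beta\in GP(X)$ share a starting point and satisfy $f_*\alpha=f_*\beta$, we must have $\alpha=\beta$. So I would fix two such generalized paths $\alpha$ and $\beta$ and aim to prove they coincide.

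First I would observe that since $f_*\alpha$ and $f_*\beta$ are literally the same generalized path in $Y$, they are $F^*$-close for every entourage $F$ of $Y$: indeed $[(f_*\alpha)_F]=[(f_*\beta)_F]$, so $(f_*\alpha)_F^{-1}(f_*\beta)_F$ is $F$-homotopic to the constant chain at the common endpoint, which is precisely the condition for $F$-shortness. Applying Lemma \ref{ShortGP_Lifts}, for each entourage $E$ of $X$ I choose the entourage $F$ of $Y$ it provides; because the images are $F^*$-close, $\alpha$ and $\beta$ are $E^*$-close. As $E$ was arbitrary, $(\alpha,\beta)\in E^*$ for every entourage $E$ of $X$.

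The remaining, and most delicate, step is to pass from ``$E^*$-close for all $E$'' to equality, and this is exactly where the Hausdorff hypothesis on $X$ enters. I would first pin down the endpoints: if $x$ and $y$ denote the endpoints of $\alpha$ and $\beta$, then $E^*$-closeness requires the chain $x,y$ to be an $E$-chain, so $(x,y)\in E$; ranging over all $E$ and using that $X$ is Hausdorff (equivalently $\bigcap_E E=\Delta$) yields $x=y$. With the endpoints identified, I would fix an arbitrary entourage $E$ and unwind the definition of $E^*$: it says $\alpha_E^{-1}\beta_E$ is $E$-homotopic to the chain $x,y=x,x$, i.e.\ to the constant chain. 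Concatenating with $\alpha_E$ and using the groupoid structure of $E$-homotopy classes of chains (so that $\alpha_E\alpha_E^{-1}$ cancels), this gives that $\beta_E$ is $E$-homotopic to $\alpha_E$, that is $[\alpha_E]=[\beta_E]$. Since this holds for every $E$, the generalized paths $\alpha$ and $\beta$ agree coordinate-by-coordinate and hence are equal.

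I expect the genuine content to live in the last paragraph: everything before it is bookkeeping plus an application of Lemma \ref{ShortGP_Lifts}, whereas the final step is where one must check that $E^*$-closeness simultaneously controls the endpoints (via Hausdorffness) and the homotopy classes $[\alpha_E]$. In fact the cleanest packaging is to prove once and for all that $X$ Hausdorff implies $\bigcap_E E^*=\Delta$, i.e.\ that $GP(X)$ is Hausdorff; the proposition is then the immediate combination of that fact with the first two paragraphs.
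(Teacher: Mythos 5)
Your proposal is correct and follows the paper's own argument essentially verbatim: apply Lemma \ref{ShortGP_Lifts} to get that the two lifts are $E^*$-close for every entourage $E$, use Hausdorffness of $X$ to identify the endpoints, and then read off equality from the definition of $E^*$. The extra detail you supply in the final paragraph (unwinding $E$-shortness to get $[\alpha_E]=[\beta_E]$) is exactly what the paper's last sentence asserts more tersely.
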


\proof
Suppose two generalized paths $\alpha$ and $\beta$ in $X$ start at the same point and have $f_*(\alpha)=f_*(\beta)$. Given an entourage $E$ of $X$, by \ref{ShortGP_Lifts}, $\alpha$ and $\beta$ are $E^*$-close. Then the endpoints of $\alpha$ and $\beta$ are $E$-close. Since $X$ is Hausdorff this implies that the endpoints are equal. Then $\alpha=\beta$ since $\alpha_E$ and $\beta_E$ are $E$-homotopic rel. endpoints for each entourage $E$ of $X$.
\endproof

In fact if $Y$ is Hausdorff, then a generalized uniform covering map $f:X\to Y$ having unique generalized path lifting is equivalent to $X$ being Hausdorff.

\begin{proposition}
Suppose $f:X\to Y$ is a generalized uniform covering map with $Y$ Hausdorff. If $f$ has uniqueness of generalized path lifts then $X$ is Hausdorff.
\end{proposition}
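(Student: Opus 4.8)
The plan is to prove that $X$ is Hausdorff by the standard entourage criterion: a uniform space is Hausdorff precisely when the intersection of all its entourages equals the diagonal. Accordingly, I would fix $x,y\in X$ with $(x,y)\in E$ for \emph{every} entourage $E$ of $X$ and aim to conclude $x=y$.

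First I would push the non-separation down to $Y$. Since $f$ generates the uniform structure on $Y$, the sets $f(E)$ form a basis for the entourages of $Y$; hence given any entourage $D$ of $Y$ there is an entourage $E$ of $X$ with $f(E)\subset D$, and $(x,y)\in E$ forces $(f(x),f(y))\in f(E)\subset D$. As $D$ was arbitrary and $Y$ is Hausdorff, this yields $f(x)=f(y)$.

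Next I would manufacture a generalized path witnessing the failure of separation. For each entourage $E$ the pair $(x,y)$ lies in $E$, so the two-term sequence $x,y$ is an $E$-chain and determines a class $[\gamma_E]$ of paths in $R(X,E)$. Because the inclusion $R(X,F)\hookrightarrow R(X,E)$ for $F\subset E$ carries this edge to the identical edge, the compatibility condition holds automatically and $\gamma=\{[\gamma_E]\}_E$ is a genuine generalized path from $x$ to $y$. Applying $f_*$ sends each chain $x,y$ to the chain $f(x),f(y)$; since $f(x)=f(y)$ this is the degenerate edge at $f(x)$, which is homotopic to the constant chain, so $f_*(\gamma)$ is the constant generalized path at $f(x)$.

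Finally I would compare $\gamma$ with the constant generalized path $c_x$ at $x$. Both start at $x$, and $f_*(c_x)$ is likewise the constant generalized path at $f(x)$, so $f_*(\gamma)=f_*(c_x)$. By the hypothesis of uniqueness of generalized path lifts, $\gamma=c_x$, and comparing endpoints gives $y=x$, as desired. The only point needing care is the verification that $\gamma$ is a generalized path, i.e.\ the $E$-homotopy compatibility across refinements; but since every $\gamma_E$ is literally the same edge $x,y$ this is routine rather than a genuine obstacle. The real content of the argument is the translation of non-separation in $X$ into an equality of $f_*$-images that uniqueness of lifts can then collapse.
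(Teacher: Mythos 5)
Your argument is correct and is essentially the paper's own proof, just run in the direct rather than contrapositive direction: both use that $f$ generating the structure on $Y$ plus $Y$ Hausdorff forces $f(x)=f(y)$, and both compare the generalized path $\{[x,y]_E\}_E$ with the constant generalized path at $x$ as two lifts of the same (constant) generalized path. No substantive difference.
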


\proof
Suppose $X$ is not Hausdorff. Then there is $x,y\in X$ with $(x,y)\in E$ for each entourage $E$ of $X$ but $x\neq y$. Consider the constant generalized path at $x$ and the generalized path $\{[x,y]_E\}_E$. They are not equal since they have different endpoints but they are both lifts of the constant generalized path at $f(x)$. Indeed, $f(x)=f(y)$ since $(f(x),f(y))\in f(E)$ for each entourage $E$ of $X$ and $f$ generates the uniform structure on $Y$.
\endproof

The most important property of covering maps is the lifting lemma, and generalized uniform covering maps have this property.

\begin{lemma} \cite{Rips} \label{LiftingLemmaForCech}
Suppose $f:X\to Y$ is a generalized uniform covering map and $g:Z\to Y$ is uniformly continuous. Suppose $X$ is Hausdorff and $Z$ is locally uniform joinable chain connected. Let $x_0\in X$, $y_0\in Y$, and $z_0\in Z$ with $f(x_0)=g(z_0)=y_0$. Then there is a unique uniformly continuous lift $h:Z\to X$ of $g$ with $h(z_0)=x_0$ if and only if $g_*(\check \pi_1(Z,z_0))\subset f_*(\check \pi_1(X,x_0))$.
\end{lemma}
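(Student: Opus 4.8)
The plan is to prove the two directions separately, with the forward implication being a routine consequence of functoriality and the reverse implication requiring an explicit construction of the lift. For the forward direction, suppose a uniformly continuous lift $h$ with $h(z_0)=x_0$ and $f\circ h=g$ exists. Then the induced maps satisfy $g_*=f_*\circ h_*$ on generalized loops, so for any element of $\check\pi_1(Z,z_0)$ represented by a generalized loop $\sigma$ we have $g_*(\sigma)=f_*(h_*(\sigma))\in f_*(\check\pi_1(X,x_0))$. Hence $g_*(\check\pi_1(Z,z_0))\subset f_*(\check\pi_1(X,x_0))$.

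For the reverse direction I would define $h$ in the usual way. Since $Z$ is locally uniform joinable chain connected, any point $z\in Z$ is joined to $z_0$ by a generalized path $\gamma$. I apply $g_*$ to obtain a generalized path $g_*(\gamma)$ in $Y$ from $y_0$ to $g(z)$, and use that $f$ has generalized path lifting to lift it to a generalized path $\tilde\gamma$ in $X$ starting at $x_0$; I then set $h(z)$ to be the endpoint of $\tilde\gamma$. The first task is well-definedness. If $\gamma,\gamma'$ both run from $z_0$ to $z$, then $\gamma(\gamma')^{-1}$ is a generalized loop, so $g_*(\gamma(\gamma')^{-1})=g_*(\gamma)g_*(\gamma')^{-1}$ represents an element of $g_*(\check\pi_1(Z,z_0))\subset f_*(\check\pi_1(X,x_0))$. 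This element therefore lifts to a generalized loop at $x_0$, and since $X$ is Hausdorff, Proposition \ref{UniqOfGenPathLifts} guarantees uniqueness of the lift starting at $x_0$. Comparing the lift of this concatenation with the concatenation of the lifts of $g_*(\gamma)$ and $g_*(\gamma')^{-1}$ forces the two candidate endpoints to coincide. The same uniqueness shows $f(h(z))=g(z)$, since $\tilde\gamma$ lifts $g_*(\gamma)$ and endpoints are preserved, and that $h(z_0)=x_0$ upon taking $\gamma$ constant.

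The main obstacle is establishing that $h$ is uniformly continuous rather than merely continuous at each point. Given an entourage $E$ of $X$, Lemma \ref{ShortGP_Lifts} supplies an entourage $F$ of $Y$ so that $F$-short generalized paths in $Y$ lift to $E$-short generalized paths in $X$. Using uniform continuity of $g$ I pull $F$ back to an entourage of $Z$, and then invoke the ``locally uniform joinable chain connected'' hypothesis on $Z$ to produce an entourage $D$ so that any two $D$-close points $z,z'$ are joined by a generalized path $\eta$ whose image $g_*(\eta)$ is $F$-short. Choosing the generalized path for $z'$ to be $\gamma\eta$, the lift of $g_*(\gamma\eta)=g_*(\gamma)g_*(\eta)$ starting at $x_0$ is $\tilde\gamma$ followed by the lift of $g_*(\eta)$ starting at $h(z)$; since $g_*(\eta)$ is $F$-short, this latter lift is $E$-short, so its endpoint $h(z')$ satisfies $(h(z),h(z'))\in E$. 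The delicate point is arranging the local hypothesis on $Z$ so that closeness of points translates uniformly into shortness of $g_*(\eta)$, which is precisely what that condition is designed to furnish.

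Finally, for uniqueness of the lift, suppose $h$ and $h'$ are two uniformly continuous lifts with $h(z_0)=h'(z_0)=x_0$. For any $z$ and any generalized path $\gamma$ from $z_0$ to $z$, both $h_*(\gamma)$ and $h'_*(\gamma)$ are lifts of $g_*(\gamma)$ starting at $x_0$, so Proposition \ref{UniqOfGenPathLifts} gives $h_*(\gamma)=h'_*(\gamma)$ and in particular $h(z)=h'(z)$. Thus $h$ is unique, completing the argument.
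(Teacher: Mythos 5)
This lemma is imported from \cite{Rips} and the paper supplies no proof of its own, so there is no internal argument to compare against; your proposal is the standard lifting-criterion argument and is correct. Specifically, you construct $h$ by lifting $g_*(\gamma)$ via generalized path lifting, use the subgroup hypothesis together with uniqueness of generalized path lifts (Proposition \ref{UniqOfGenPathLifts}, available because $X$ is Hausdorff) for well-definedness and for uniqueness of $h$, and combine Lemma \ref{ShortGP_Lifts} with the local uniform joinability of $Z$ and the uniform continuity of $g$ to get uniform continuity of $h$ --- exactly the ingredients one expects the cited proof to use.
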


Conditions for the endpoint map $GP(X,x_0)\to X$ to be a generalized uniform covering map are introduced in \cite{Rips}. A uniform space $X$ is \textbf{uniform joinable} if any two points in $X$ can be joined by a generalized path. A uniform space $X$ is \textbf{locally uniform joinable} if for each entourage $E$ of $X$ there is an entourage $F\subset E$ such that if $(x,y)\in F$, $x$ and $y$ can be joined by an $E$-short generalized path. It is easy to see that $X$ is locally uniform joinable and chain connected if and only if $X$ is locally uniform joinable and uniform joinable. We will use the former description.

\begin{proposition} \cite{Rips}
The endpoint map $GP(X,x_0)\to X$ is a generalized uniform covering map if and only if $X$ is locally uniform joinable chain connected.
\end{proposition}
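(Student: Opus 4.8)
The plan is to prove the biconditional by verifying, in each direction, the defining conditions of a generalized uniform covering map for the endpoint map $p\colon GP(X,x_0)\to X$. The easier direction to set up first is the ``if'' direction: assuming $X$ is locally uniform joinable and chain connected, I would show $p$ generates the uniform structure on $X$ and has chain lifting, approximate uniqueness of chain lifts, and generalized path lifting. The key structural fact I would lean on is the description of the basic entourages $E^*$ of $GP(X,x_0)$ and Lemma \ref{ShortGP_Lifts}-style reasoning about $E$-short generalized paths, so the hypothesis of local uniform joinability should feed directly into producing $E$-short generalized paths joining $E$-close points.

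For the ``if'' direction I would proceed condition by condition. First, to see that $p$ generates the uniform structure, I would check that $p(E^*)$ is an entourage of $X$ for each $E$ and conversely that every entourage of $X$ arises this way; here $p(E^*)$ should come out to be (a subset containing) the set of pairs $(x,y)$ joined by an $E$-short generalized path, and local uniform joinability guarantees this set is itself an entourage, giving the basis condition. Next, chain lifting: given an $E^*$-chain condition downstairs, I would lift an $p(F)$-chain in $X$ to an $F^*$-chain of generalized paths by successively concatenating short generalized paths realizing each $p(F)$-step, using uniform joinability to start and local uniform joinability to keep each step short. Approximate uniqueness of chain lifts follows because two lifts of the same chain starting at the same generalized path differ at each stage by an $E$-short loop, so the lifts are $E^*$-close. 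Finally, generalized path lifting is essentially tautological for the endpoint map: a generalized path $\gamma$ in $X$ starting at $x_0$ is itself an element of $GP(X,x_0)$, and the assignment $t\mapsto \gamma|_{[0,t]}$ (formalized at the level of chains) produces the lifting generalized path in $GP(X,x_0)$, with $p$ sending it back to $\gamma$.

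For the ``only if'' direction I would argue contrapositively in two halves. If $X$ fails to be chain connected, then $X$ is not chain connected as a space while the image of $p$ together with chain lifting would force chain connectivity of the relevant component structure, contradicting that a generalized uniform covering map generates the uniform structure and has chain lifting onto all of $X$; more directly, chain connectivity of $X$ is forced because $GP(X,x_0)$ is built from generalized paths all emanating from $x_0$, so the image of $p$ can reach $y$ only if $y$ is joined to $x_0$ by a chain at every scale. If $X$ fails to be locally uniform joinable, I would show the approximate-uniqueness or the uniform-structure-generating condition breaks: there is an entourage $E$ such that for every $F\subset E$ some $(x,y)\in F$ cannot be joined by an $E$-short generalized path, and I would use this to exhibit an entourage of $X$ whose preimage structure cannot be matched by any $F^*$, so $p$ cannot both generate the uniform structure and satisfy chain lifting with the short-lifting behavior demanded by Lemma \ref{ShortGP_Lifts}.

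The step I expect to be the main obstacle is pinning down precisely what $p(E^*)$ is and proving the basis condition for ``$p$ generates the uniform structure,'' since this is where local uniform joinability must be used in both directions and where the interplay between the combinatorial entourages $E^*$ on $GP(X,x_0)$ and the original entourages $E$ on $X$ is most delicate. In particular, verifying that $E$-short generalized paths behave well under concatenation and inversion, so that $p(E^*)$ is genuinely symmetric and satisfies the half-size composition property $F^*\circ F^*\subset E^*$ needed for a uniform-structure basis, will require careful $E$-homotopy bookkeeping in the Rips complexes rather than a one-line argument.
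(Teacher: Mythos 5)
The paper does not prove this proposition; it is quoted from \cite{Rips} without argument, so there is no in-paper proof to compare against. Judged on its own terms, your outline has the right architecture (verify the four defining conditions in one direction, read them back in the other), and several pieces are sound: identifying $p(E^*)$ as the set of pairs joined by $E$-short generalized paths is exactly where local uniform joinability enters, and your ``only if'' direction is essentially correct, though more roundabout than necessary --- once $p$ generates the uniform structure, $F=p(E^*)\subset E$ is already an entourage witnessing local uniform joinability, and surjectivity of $p$ gives uniform joinability, hence chain connectedness; no contrapositive is needed.

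The genuine gap is your treatment of generalized path lifting as ``essentially tautological.'' It is not: the condition requires lifting a generalized path $\gamma$ in $X$ starting at $p(\alpha)$ for an \emph{arbitrary} $\alpha\in GP(X,x_0)$, to a generalized path in $GP(X,x_0)$ starting at $\alpha$ --- that is, a coherent family, over all entourages $E$, of homotopy classes of $E^*$-chains whose terms are themselves full generalized paths from $x_0$. Your proposed ``$t\mapsto\gamma|_{[0,t]}$'' does not formalize at the level of chains without work: a truncation $y_1,\dots,y_i$ of the $E$-term $\gamma_E$ is a finite chain, not an element of $GP(X,x_0)$, so each partial stage must be completed to a genuine generalized path, and these completions must be chosen compatibly as $E$ varies so that the resulting family satisfies the coherence condition defining a generalized path in $GP(X,x_0)$. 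This is where the real content of the result in \cite{Rips} lies, and your sketch gives no mechanism for it. A secondary, smaller issue: your approximate-uniqueness argument (``the lifts differ at each stage by an $E$-short loop'') conceals an induction along the chain in which one must check that the concatenation of an $F$-short path, an $E$-short loop, and an $F$-short path is again $E$-short via a backtracking homotopy in $R(X,E)$; this works, but it is the kind of $E$-homotopy bookkeeping you correctly flag as delicate elsewhere and should not be elided here.
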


In particular there is the following equivalence.

\begin{proposition} \label{GP(X)toXGenerates}
Suppose $X$ is chain connected. Then the endpoint map $GP(X,x_0)\to X$ generates the uniform structure on $X$ if and only if $X$ is locally uniform joinable.
\end{proposition}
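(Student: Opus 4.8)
The plan is to reduce the statement to a single claim about the images of the basic entourages $E^*$ under the endpoint map, which I will write as $p\colon GP(X,x_0)\to X$. First I would record the elementary observation that $p(E^*)\subset E$ for every entourage $E$ of $X$: if $(\alpha,\beta)\in E^*$ with endpoints $x$ and $y$, then $\alpha_E^{-1}\beta_E$ is $E$-homotopic to the chain $x,y$, and this already requires $x,y$ to be an $E$-chain, i.e.\ $(x,y)\in E$. In particular $p$ is uniformly continuous, with no joinability hypothesis needed. Combined with uniform continuity, generating the structure is equivalent to the image of every entourage of $GP(X,x_0)$ being an entourage of $X$; and since the sets $E^*$ form a basis for the uniform structure of $GP(X,x_0)$ and supersets of entourages are entourages, this reduces to the single condition that $p(E^*)$ is an entourage of $X$ for every entourage $E$. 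Thus the whole proposition comes down to comparing this condition with local uniform joinability.

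For the forward direction, assuming $p$ generates the structure, I would fix an entourage $E$ and set $F=p(E^*)$. By the reduction $F$ is an entourage, and $F\subset E$ by the observation above. If $(x,y)\in F$ then there is a pair $(\alpha,\beta)\in E^*$ with $p(\alpha)=x$ and $p(\beta)=y$, and then the generalized path $\alpha^{-1}\beta$ runs from $x$ to $y$ and is $E$-short by the very definition of $E^*$. Hence $F$ witnesses local uniform joinability for $E$, and this holds for every $E$.

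For the converse I would assume $X$ is locally uniform joinable; since $X$ is also chain connected it is uniform joinable, so every point is the endpoint of a generalized path from $x_0$. Given $E$, choose $F\subset E$ as in the definition of local uniform joinability. For $(x,y)\in F$ pick an $E$-short generalized path $\gamma$ from $x$ to $y$ and a generalized path $\alpha$ from $x_0$ to $x$, and set $\beta=\alpha\gamma$, a generalized path from $x_0$ to $y$. I would then verify $(\alpha,\beta)\in E^*$, which gives $F\subset p(E^*)$; since $F$ is an entourage this makes $p(E^*)$ an entourage, establishing the reduced condition and hence that $p$ generates the structure.

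The main obstacle is exactly this last verification. One must check that $\alpha_E^{-1}\beta_E=\alpha_E^{-1}\alpha_E\gamma_E$ is $E$-homotopic rel endpoints to the two-term chain $x,y$. The key point is that the edge path of $\alpha_E^{-1}\alpha_E$ traverses a path and then its reverse, so it is null-homotopic rel endpoints in $R(X,E)$; cancelling it leaves $\gamma_E$, which is $E$-homotopic to $x,y$ because $\gamma$ was chosen $E$-short. Care is needed to carry out this cancellation correctly at the level of homotopy classes of paths in the Rips complex, to confirm that $\beta=\alpha\gamma$ is a genuine generalized path (a compatible family over all entourages), and to make sure the concatenation conventions for generalized paths match those used in the definition of $E^*$.
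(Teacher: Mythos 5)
Your argument is correct, but it takes a different route from the paper: the paper offers no proof of this proposition at all, presenting it as a particular case of the preceding result quoted from \cite{Rips} (that the endpoint map is a generalized uniform covering map if and only if $X$ is locally uniform joinable chain connected). That derivation is immediate only in one direction --- local uniform joinability plus chain connectedness gives a generalized uniform covering map, which in particular generates the structure --- while the converse (that merely generating the structure already forces local uniform joinability) is silently deferred to the reference. Your proof fills this in directly from the definitions: the reduction to the single condition that $p(E^*)$ is an entourage for every $E$ is clean, the containment $p(E^*)\subset E$ correctly extracts uniform continuity and the $F\subset E$ requirement in the definition of local uniform joinability, and the forward direction reads off an $E$-short generalized path $\alpha^{-1}\beta$ exactly as the definition of $E^*$ intends. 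In the converse, chain connectedness is used precisely where it must be (to produce $\alpha$ from $x_0$ to $x$ via uniform joinability), and the cancellation of $\alpha_E^{-1}\alpha_E$ in $R(X,E)$ is the standard fact that an edge path followed by its reverse is null-homotopic rel endpoints, so the step you flag as delicate does go through. The payoff of your approach is a self-contained proof that isolates exactly which hypotheses each direction needs; the cost is that it duplicates work already done in \cite{Rips}, which is presumably why the paper simply cites it.
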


\section{Generalized uniform covering maps relative to subgroups of the uniform fundamental group}

Let us first classify generalized uniform covering spaces over $X$ in terms of subgroups of $\check \pi_1(X)$. The corresponding theory for uniform covering spaces will follow.

We will construct, given a subgroup $H$ of $\check \pi_1(X)$, a generalized uniform covering space of $X$. It will be constructed as the quotient of an action of $H$ on $GP(X)$. Given $h\in H$ and $\alpha\in GP(X)$, let $h\cdot\alpha =h\alpha$. Let $q_H:GP(X)\to GP(X)/H$ be the induced projection. Given $\alpha\in GP(X)$, let $[\alpha]_H$ denote the orbit of $\alpha$ under the action.

Recall an action of a group $G$ on a uniform space $X$ is \textbf{uniformly equicontinuous} \cite{J} if for each entourage $E$ of $X$ there is an entourage $F$ of $X$ such that for each $g\in G$, $F\subset g^{-1}(E)$. Equivalently, $X$ has a basis of $G$-invariant entourages \cite{GroupActions}. An entourage $E$ is $\bm{G}$\textbf{-invariant} if for each $g\in G$, $gE=E$.

\begin{lemma} \label{ActionEquicontinuous}
Given a subgroup $H$ of $\check \pi_1(X)$, the action of $H$ on $GP(X)$ is uniformly equicontinuous. In particular, given an entourage $E$ of $X$, the entourage $E^*$ of $GP(X)$ is invariant under the action of $H$.
\end{lemma}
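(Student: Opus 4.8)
The plan is to prove the ``in particular'' assertion first---that each basic entourage $E^*$ is $H$-invariant---and then deduce uniform equicontinuity from the basis characterization cited from \cite{GroupActions}. Since the entourages $E^*$ form a basis for the uniform structure on $GP(X)$, once each one is shown to be $H$-invariant the space $GP(X)$ will have a basis of $H$-invariant entourages, which is precisely the statement that the action is uniformly equicontinuous. So everything reduces to showing $hE^*=E^*$ for each fixed $h\in H$.

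To establish this I would verify the equivalence that $(\alpha,\beta)\in E^*$ if and only if $(h\alpha,h\beta)\in E^*$. The action of $h$ is by left concatenation with the generalized loop $h$, so the endpoints of $h\alpha$ and $h\beta$ agree with those of $\alpha$ and $\beta$ respectively; in particular the chain $x,y$ appearing in the definition of $E$-shortness is unchanged under the action. The key computation is that at each entourage level $E$ one has $(h\alpha)_E^{-1}(h\beta)_E=\alpha_E^{-1}h_E^{-1}h_E\beta_E$, and since $h_E^{-1}h_E$ is $E$-homotopic rel endpoints to the constant chain at $x_0$, this chain is $E$-homotopic rel endpoints to $\alpha_E^{-1}\beta_E$. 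Consequently $(h\alpha)_E^{-1}(h\beta)_E$ is $E$-homotopic to $x,y$ exactly when $\alpha_E^{-1}\beta_E$ is, which is the asserted equivalence.

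From this equivalence the inclusion $hE^*\subset E^*$ is immediate. For the reverse inclusion, given $(\gamma,\delta)\in E^*$ I would write $\gamma=h(h^{-1}\gamma)$ and $\delta=h(h^{-1}\delta)$ and apply the equivalence to $h^{-1}\in H$ to conclude $(h^{-1}\gamma,h^{-1}\delta)\in E^*$, so that $(\gamma,\delta)\in hE^*$. This yields $hE^*=E^*$ for every $h\in H$, completing the invariance of $E^*$ and hence, by the basis characterization, the uniform equicontinuity of the action.

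The only delicate point is the levelwise bookkeeping: I must confirm that concatenation and inversion of generalized paths are computed at each level $E$ as claimed, that $h_E^{-1}h_E$ genuinely is $E$-homotopic rel endpoints to the constant chain (the standard cancellation of inverses for chains, read off in the Rips complex $R(X,E)$), and that these homotopies are compatible across the various entourage levels so that $h\alpha$ and $h\beta$ are honest generalized paths. None of this is hard, but it is where the care is required; once it is in place the invariance of $E^*$ follows directly from the cancellation of $h_E^{-1}h_E$.
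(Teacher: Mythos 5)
Your proposal is correct and follows essentially the same route as the paper: the key step in both is the cancellation $(h\alpha)^{-1}(h\beta)=\alpha^{-1}h^{-1}h\beta=\alpha^{-1}\beta$ showing $E$-shortness is preserved, combined with the basis-of-invariant-entourages characterization of uniform equicontinuity. You are merely more explicit about the reverse inclusion (via $h^{-1}$) and the levelwise bookkeeping, which the paper leaves implicit.
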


\begin{proof}
Let $E$ be an entourage of $X$. Suppose $(\alpha,\beta)\in E^*$ and $h\in H$. Then $(h\alpha)^{-1}(h\beta)=\alpha^{-1}h^{-1}h\beta=\alpha^{-1}\beta$ is $E$-short so $(h\cdot\alpha,h\cdot\alpha)\in E^*$.
\end{proof}

An action of $G$ on $X$ is \textbf{neutral} \cite{J} if for each entourage $E$ of $X$ there is an entourage $F$ of $X$ such that if $(x,gy)\in F$ there is an $h\in G$ with $(hx,y)\in E$. Since equicontinuous actions are neutral, $q_H$ has chain lifting and generates a uniform structure on $GP(X)/H$ \cite[Remark 3.2]{GroupActions}. 

\begin{proposition} \label{Composition}
Suppose $f:X\to Y$ generates the uniform structure on $Y$ and $g:Y\to Z$ is any function between uniform spaces. 
\begin{itemize}
\item[1.] If $g\circ f$ generates the uniform structure on $Z$ then $g$ generates the uniform structure on $Z$.
\item[2.] If $f$ has chain lifting and $g\circ f$ has approximate uniqueness of chain lifts then $g$ has approximate uniqueness of chain lifts.
\item[3.] If $g\circ f$ has uniqueness of chain lifts then $g$ has uniqueness of chain lifts.
\item[4.] If $g\circ f$ has chain lifting then $g$ has chain lifting.
\end{itemize}
\end{proposition}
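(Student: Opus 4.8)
The plan is to lean on two consequences of the standing hypothesis that $f$ generates the uniform structure on $Y$. The first is bookkeeping: every entourage of $Y$ contains a set of the form $f(E)$ for some entourage $E$ of $X$, and each such $f(E)$ is itself an entourage of $Y$. The second, and the observation that makes the chain arguments go through, is that $f$ must be \emph{surjective}. Indeed, $f(E)$ can only be an entourage of $Y$ if it contains the diagonal $\Delta_Y$; but $f(E)\subset f(X)\times f(X)$, so $\Delta_Y\subset f(E)$ forces $f(X)=Y$. Surjectivity guarantees that the initial point of any chain in $Y$ has a preimage in $X$, which is exactly what is needed to lift chains whenever a chain-lifting hypothesis is available.

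Part 1 involves no chains. Given an entourage $C$ of $Z$, I would verify uniform continuity of $g$ by choosing $E$ with $(g\circ f)(E)\subset C$ and setting $D=f(E)$, so that $g(D)=(g\circ f)(E)\subset C$; and each $g(D)$ is an entourage of $Z$ because, choosing $E$ with $f(E)\subset D$, it contains $(g\circ f)(E)$. Thus $g$ is uniformly continuous and carries entourages to entourages, i.e.\ it generates the structure. For Part 4 the scheme is lift-through-$g\circ f$-then-project: given $D$, choose $E$ with $f(E)\subset D$ and apply chain lifting of $g\circ f$ to $E$ to obtain $F$, then set $D'=f(F)$, so that $g(D')=(g\circ f)(F)$. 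A $g(D')$-chain in $Z$ starting at $g(y)$ is then a $(g\circ f)(F)$-chain starting at $(g\circ f)(x)$ for any $x$ with $f(x)=y$ (surjectivity); lifting it to an $E$-chain in $X$ starting at $x$ and applying $f$ produces an $f(E)\subset D$-chain in $Y$ starting at $y$ that projects to the given $Z$-chain.

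Part 2 uses the extra hypothesis that $f$ itself has chain lifting, and runs lift-via-$f$-then-apply-$g\circ f$. Given $D$, I would pick $E$ with $f(E)\subset D$, apply approximate uniqueness of chain lifts for $g\circ f$ to $E$ to get $E'\subset E$, then apply chain lifting of $f$ to $E'$ to get $F$ (shrunk so that $F\subset E'$), and set $D'=f(F)\subset D$. Two $D'$-chains in $Y$ with a common initial point and identical $g$-images are $f(F)$-chains; lifting both (from a common preimage of the initial point) yields two $E'$-chains in $X$ with a common start and identical $(g\circ f)$-images, hence $E$-close by approximate uniqueness for $g\circ f$, and applying $f$ shows the original chains are $f(E)$-close and therefore $D$-close.

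Part 3 is where I expect the main obstacle, since it does \emph{not} assume $f$ has chain lifting, so the lift-apply-project scheme is unavailable: a $D'$-chain in $Y$ simply cannot be lifted to $X$. The remedy is to trade uniqueness of chain lifts for a transverse entourage. First I would show that uniqueness of chain lifts for $g\circ f$ yields an entourage $F$ of $X$ transverse to $g\circ f$: applying the definition to $X\times X$ and testing it on the length-two chains $x_1,x_1$ and $x_1,x_2$ forces $x_1=x_2$ whenever $(x_1,x_2)\in F$ and $(g\circ f)(x_1)=(g\circ f)(x_2)$. Then $f(F)$ is transverse to $g$, because $(y_1,y_2)\in f(F)$ with $g(y_1)=g(y_2)$ lifts to some $(x_1,x_2)\in F$ with $(g\circ f)(x_1)=(g\circ f)(x_2)$, whence $x_1=x_2$ and $y_1=y_2$. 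Finally, a transverse entourage for $g$ gives uniqueness of chain lifts for $g$ by the standard inductive argument: given $D$, choose a symmetric $D'\subset D$ with $D'\circ D'$ contained in the transverse entourage, and induct along two $D'$-chains with equal $g$-images.
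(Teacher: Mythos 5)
Your proof is correct and follows essentially the same route as the paper's: the same lift-through-the-composite scheme in parts 1, 2, and 4, and the same passage to a transverse entourage $f(E_0)$ in part 3. The only difference is that you explicitly justify two facts the paper uses silently — that $f$ generating the structure on $Y$ forces $f$ to be surjective, and that uniqueness of chain lifts is equivalent to the existence of a transverse entourage — which is a reasonable amount of extra care rather than a different argument.
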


\proof
\textrm{  }
\par\noindent
1. First let us see that $g$ is uniformly continuous. Given an entourage $E$ of $Z$, $(g\circ f)^{-1}(E)$ is an entourage of $X$. Then $f((g\circ f)^{-1}(E))$ is an entourage of $Y$. We have $g(f((g\circ f)^{-1}(E)))\subset E$. Now, given an entourage $E$ of $Y$, we wish to see that $g(E)$ is an entourage of $Z$. But $g\circ f(f^{-1}(E))\subset g(E)$ and $g\circ f(f^{-1}(E))$ is an entourage of $Z$.

\vspace{.1in}\noindent 2. Given an entourage $E$ of $Y$, choose an entourage $F\subset f^{-1}(E)$ so that two $F$-chains starting at the same point are $f^{-1}(E)$-close if their images under $g\circ f$ are identical. Choose an entourage $K\subset E$ so that $K$-chains lift to $F$-chains. Suppose two $K$-chains start at a point $y\in Y$ have identical images under $g$. Choose $x\in f^{-1}(y)$ and lift the two chains to $F$-chains in $X$ starting at $x$. These chains have identical images under $g\circ f$ so they are $f^{-1}(E)$-close. Therefore the $K$-chains are $E$-close.

\vspace{.1in}\par\noindent 3. Suppose $E_0$ is an entourage of $X$ that is transverse to $g\circ f$. Then $f(E_0)$ is an entourage of $Y$ that is transverse to $g$.

\vspace{.1in}\par\noindent 4. Given an entourage $E$ of $Y$, choose an entourage $F\subset f^{-1}(E)$ so that $g\circ f(F)$-chains lift to $f^{-1}(E)$-chains. Notice $f(F)\subset E$. Let us see that $g(f(F))$-chains in $Z$ lift to $E$-chains in $Y$. Suppose $x\in Y$ and $(g(x),y)\in g(f(F))$. Choose $x'\in f^{-1}(x)$. Then there is a $y'\in X$ with $g\circ f(y')=y$ and $(x',y')\in f^{-1}(E)$. Notice $(f(x'),f(y'))=(x,f(y'))\in E$.

\endproof

\begin{proposition} \label{p_HIsAGUCM}
Suppose $X$ is a uniform space and $H$ is a subgroup of $\check \pi_1(X)$. Then $X$ is locally uniform joinable chain connected if and only if the endpoint map $p_H:GP(X)/H\to X$ is a generalized uniform covering map.
\end{proposition}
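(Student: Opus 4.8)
The plan is to exploit the factorization of the endpoint map $p\colon GP(X)\to X$ through the quotient, namely $p=p_H\circ q_H$, where $p_H([\alpha]_H)$ is the (well-defined) endpoint of $\alpha$. The two ingredients I would lean on are: first, the discussion preceding this proposition, which via Lemma \ref{ActionEquicontinuous} and \cite[Remark 3.2]{GroupActions} tells me that $q_H$ generates the uniform structure on $GP(X)/H$ and has chain lifting; and second, the earlier result of \cite{Rips} that $p$ itself is a generalized uniform covering map exactly when $X$ is locally uniform joinable chain connected. Most of the proof then becomes a matter of transferring properties across the factorization by applying Proposition \ref{Composition} with $f=q_H$ and $g=p_H$, so that $g\circ f=p$.

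For the forward direction, I would assume $X$ is locally uniform joinable chain connected, so that $p$ is a generalized uniform covering map. Since $q_H$ generates the uniform structure on $GP(X)/H$, I can invoke the relevant parts of Proposition \ref{Composition}: part~1 gives that $p_H$ generates the uniform structure on $X$; part~4 gives that $p_H$ has chain lifting; and part~2, using that $q_H$ has chain lifting, gives that $p_H$ has approximate uniqueness of chain lifts. The one property of a generalized uniform covering map not addressed by Proposition \ref{Composition} is generalized path lifting, which I would handle directly: given $\xi\in GP(X)/H$ and a generalized path $\gamma$ in $X$ starting at $p_H(\xi)$, choose $\alpha\in q_H^{-1}(\xi)$, lift $\gamma$ through $p$ to a generalized path $\tilde\gamma$ starting at $\alpha$, and then push forward to $(q_H)_*(\tilde\gamma)$, a generalized path starting at $\xi$ with $(p_H)_*\bigl((q_H)_*(\tilde\gamma)\bigr)=p_*(\tilde\gamma)=\gamma$.

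For the converse, I would assume $p_H$ is a generalized uniform covering map, so in particular $p_H$ generates the uniform structure on $X$; combined with the fact that $q_H$ generates the structure on $GP(X)/H$, the composition $p$ generates the uniform structure on $X$. Because a map generating the structure on $X$ must be surjective (an image $p_H(E')$ can contain the full diagonal of $X$ only if $p_H$ is onto), every point of $X$ is the endpoint of a generalized path based at $x_0$; hence $X$ is uniform joinable and therefore chain connected. With $X$ chain connected and $p$ generating the structure on $X$, Proposition \ref{GP(X)toXGenerates} yields that $X$ is locally uniform joinable, which together with chain connectedness completes this direction.

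The routine parts are the applications of Proposition \ref{Composition}. The two steps I expect to require the most care are the generalized path lifting in the forward direction, since it is not covered by Proposition \ref{Composition} and relies on the functoriality of $(-)_*$ and on $q_H$ being uniformly continuous so that $(q_H)_*$ is defined, and the extraction of chain connectedness in the converse, where the surjectivity forced by the generating condition is the key observation that upgrades ``generates the structure'' to the hypotheses needed for Proposition \ref{GP(X)toXGenerates}.
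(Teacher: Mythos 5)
Your proof is correct and follows essentially the same route as the paper: factor $p=p_H\circ q_H$, transfer chain lifting, approximate uniqueness of chain lifts, and the generating property across the factorization via Proposition \ref{Composition}, handle generalized path lifting by pushing a lift through $p$ forward along $(q_H)_*$, and reduce the converse to Proposition \ref{GP(X)toXGenerates}. Your extra step in the converse --- deducing surjectivity, hence uniform joinability and chain connectedness, before invoking \ref{GP(X)toXGenerates} --- is a welcome bit of care, since that proposition assumes chain connectedness as a hypothesis.
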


\proof
Suppose $X$ is uniform joinable chain connected. Since $p_H\circ q_H=p$ where $p:GP(X)\to X$ is the endpoint map, according to \ref{Composition}, $p_H$ has chain lifting. Also according to \ref{Composition}, $p_H$ has approximate uniqueness of chain lifts. Finally, since $p$ has generalized path lifting, $p_H$ has generalized path lifting.

Now suppose $p_H$ is a generalized uniform covering map. In particular it generates the uniform structure on $X$ so $p=p_H\circ q_H$ generates the uniform structure on $X$. Recall $p$ generates the uniform structure on $X$ if and only if $X$ is uniform joinable chain connected (\ref{GP(X)toXGenerates}).
\endproof

We wish to have that ${p_H}_*(\check\pi_1(GP(X)/H,[\alpha_0]_H))=H$ where $\alpha_0$ is the constant generalized path at the basepoint of $X$. For this proof we need $p_H$ to have unique generalized path lifting. Recall that a generalized uniform covering map $f:X\to Y$ has unique generalized path lifting if $X$ is Hausdorff (\ref{UniqOfGenPathLifts}).

\begin{proposition} \label{GP(X)/H_Hausdorff}
Suppose $X$ is Hausdorff. Then $GP(X)/H$ is Hausdorff if and only if $H$ is a closed subgroup of $\check \pi_1(X)$.
\end{proposition}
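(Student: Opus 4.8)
The plan is to reduce Hausdorffness of the quotient to a statement about the closure of $H$ inside $\check\pi_1(X)$, exploiting the fact (recorded just before the proposition) that the projection $q_H$ generates the uniform structure on $GP(X)/H$, so that the images $q_H(E^*)$ form a basis of entourages. A uniform space is Hausdorff exactly when the intersection of a basis of entourages is the diagonal, so $GP(X)/H$ is Hausdorff if and only if the condition $([\alpha]_H,[\beta]_H)\in q_H(E^*)$ for every entourage $E$ of $X$ forces $[\alpha]_H=[\beta]_H$.

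First I would unwind membership in $q_H(E^*)$. Using that $E^*$ is $H$-invariant (Lemma~\ref{ActionEquicontinuous}), I would show $([\alpha]_H,[\beta]_H)\in q_H(E^*)$ if and only if there is an $h\in H$ with $(\alpha,h\beta)\in E^*$: given representatives $h_1\alpha,h_2\beta$ that are $E^*$-close, applying $h_1^{-1}$ yields $(\alpha,h_1^{-1}h_2\beta)\in E^*$. Thus a pair of distinct orbits fails to be separated by any $E^*$ precisely when $\alpha$ lies in the closure of the orbit $H\beta$ in $GP(X)$. Moreover, since $E^*$-closeness forces the endpoints to be $E$-close and $X$ is Hausdorff, any such $\alpha$ has the same endpoint $x$ as $\beta$; in particular $\alpha\beta^{-1}$ is a well-defined generalized loop at the basepoint, i.e. an element of $\check\pi_1(X)$.

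The technical heart is a right-translation identity: for generalized paths $\alpha,\beta$ with common endpoint and $h\in H$, I claim $(\alpha,h\beta)\in E^*$ if and only if $(\alpha\beta^{-1},h)\in E^*$. Writing $\alpha=(\alpha\beta^{-1})\beta$, this says that right concatenation by $\beta$ carries $E^*$ onto $E^*$ on the relevant fibers. At the level of chains it reduces to showing that $(g_1\beta)^{-1}(g_2\beta)=\beta^{-1}g_1^{-1}g_2\beta$ is $E$-short whenever $g_1^{-1}g_2$ is, which holds because an $E$-null chain conjugated by $\beta_E$ is $E$-homotopic to $\beta_E^{-1}\beta_E$, and a chain followed by its reverse backtracks to the constant chain in the Rips complex $R(X,E)$. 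Granting this, $\alpha\in\overline{H\beta}$ if and only if $\alpha\beta^{-1}\in\overline{H}$ (closure in $\check\pi_1(X)$), while $[\alpha]_H=[\beta]_H$ if and only if $\alpha\beta^{-1}\in H$.

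Finally I would assemble the equivalence. For fixed $\beta$ ending at $x$ the product $\alpha\beta^{-1}$ ranges over all of $\check\pi_1(X)$ as $\alpha$ ranges over $p^{-1}(x)$, and the resulting condition $g\in\overline{H}\Rightarrow g\in H$ does not depend on $\beta$; so the separation condition for all $\alpha,\beta$ is equivalent to $\overline{H}\subseteq H$, i.e. to $H$ being closed. I expect the main obstacle to be the right-translation identity: verifying that conjugation by the coherent family $\{\beta_E\}$ transports $E$-shortness correctly at each level, and in particular confirming that no refinement beyond $E$ is needed, so that the correspondence between $E^*$-neighborhoods on the two fibers is exact. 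The endpoint-matching step, where Hausdorffness of $X$ enters, is comparatively routine but essential for $\alpha\beta^{-1}$ to be defined.
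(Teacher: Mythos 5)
Your proposal is correct and follows essentially the same route as the paper: both reduce Hausdorffness of $GP(X)/H$ to closedness of $H$ by using $H$-invariance of $E^*$ to normalize representatives, using Hausdorffness of $X$ to force common endpoints, and substituting $\lambda=\alpha\beta^{-1}$. Your explicit verification of the right-translation identity $(\alpha,h\beta)\in E^*\iff(\alpha\beta^{-1},h)\in E^*$ fills in a step the paper's proof leaves implicit.
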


\proof
It is easy to see that for a uniform space $X$ and a subset $A\subset X$, $x\in X$ is in the closure of $A$ if and only if $B(x,E)\cap A$ is nonempty for each entourage $E$ of $X$.

Given that $X$ is Hausdorff, $GP(X)/H$ being Hausdorff means that if $\alpha,\beta\in GP(X)$ end at the same point and for each entourage $E$ of $X$ there is a $\gamma(E)\in H$ with $(\alpha,\gamma(E)\beta)\in E^*$, then $\alpha\beta^{-1}\in H$. Now $H$ being a closed subgroup of $\check\pi_1(X)$ means that if $\lambda\in \check\pi_1(X)$ and for each entourage $E$ of $X$ there is a $\gamma(E)\in H$ with $(\lambda,\gamma(E))\in E^*$, then $\lambda\in H$. Since $E^*$ is invariant under the action of $H$, taking $\lambda=\alpha\beta^{-1}$ shows the equivalence.
\endproof

The following indicates that $H$ being closed in $\check \pi_1(X)$ is equivalent to $H$ being complete.

\begin{proposition}
$\check \pi_1(X)$ is complete.
\end{proposition}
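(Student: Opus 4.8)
The plan is to exploit the description $\check\pi_1(X,x_0)\cong\varprojlim_E\pi_1(R(X,E),x_0)$ recalled in the introduction, together with the observation that the generating entourages $E^*$ realize the uniformity on $\check\pi_1(X)$ as the inverse-limit uniformity over the \emph{discrete} groups $\pi_1(R(X,E),x_0)$. Once that identification is in place, completeness is the standard fact that an inverse limit (a closed subspace of a product) of complete uniform spaces is complete, and discrete uniform spaces are trivially complete.

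First I would unwind $E^*$ restricted to loops. For $\alpha,\beta\in\check\pi_1(X)$ we have $(\alpha,\beta)\in E^*$ exactly when $\alpha_E^{-1}\beta_E$ is $E$-homotopic to the constant chain at $x_0$, i.e.\ when $[\alpha_E]=[\beta_E]$ in $\pi_1(R(X,E),x_0)$. Writing $p_E\colon\check\pi_1(X)\to\pi_1(R(X,E),x_0)$ for the projection, this says $E^*=(p_E\times p_E)^{-1}(\Delta)$, the preimage of the diagonal under a map into a discrete space. Thus the uniformity on $\check\pi_1(X)$ is precisely the initial uniformity induced by the $p_E$ from the discrete uniformities on the $\pi_1(R(X,E),x_0)$.

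Rather than cite the general inverse-limit theorem abstractly, I would carry out the one concrete verification it rests on. Let $\mathcal F$ be a Cauchy filter on $\check\pi_1(X)$. For each entourage $E$ choose $A_E\in\mathcal F$ with $A_E\times A_E\subset E^*$; then all members of $A_E$ share one class $c_E:=p_E(\gamma)\in\pi_1(R(X,E),x_0)$, and $c_E$ is independent of the choice of $A_E$ since any two members of $\mathcal F$ meet. For $F\subset E$ pick $\gamma\in A_E\cap A_F$; the compatibility built into $\gamma$ as a generalized loop (that $\gamma_F$ is $E$-homotopic to $\gamma_E$) says exactly that the bonding map $\pi_1(R(X,F),x_0)\to\pi_1(R(X,E),x_0)$ sends $c_F$ to $c_E$. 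Hence $(c_E)_E$ is a coherent family, i.e.\ an element $\lambda\in\varprojlim_E\pi_1(R(X,E),x_0)=\check\pi_1(X)$. Finally $\mathcal F\to\lambda$, because for each $E$ every $\gamma\in A_E$ satisfies $[\gamma_E]=c_E=[\lambda_E]$, so $A_E\subset B(\lambda,E^*)$, and each basic neighborhood of $\lambda$ therefore contains a member of $\mathcal F$.

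The step I expect to be the crux is showing that the locally defined classes $c_E$ assemble into a single coherent point of the inverse limit: this is where both the finite-intersection property of the filter (for well-definedness of $c_E$ and for comparing classes across different $E$, using that entourages are directed under intersection) and the structural compatibility in the definition of a generalized loop must be used together. Everything surrounding it is routine.
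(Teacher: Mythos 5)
Your proposal is correct and, once the inverse-limit framing is stripped away, the concrete verification you carry out (extract the common class $c_E$ from an $E^*$-bounded member $A_E$ of the Cauchy filter, use an element of $A_E\cap A_F$ to check coherence for $F\subset E$, and conclude $A_E\subset B(c,E^*)$) is exactly the paper's proof. The preliminary identification of $E^*$ on loops with equality of classes in the discrete group $\pi_1(R(X,E),x_0)$ is a harmless and accurate gloss on what the paper leaves implicit.
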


\proof
Suppose $\mathcal F$ is a Cauchy filter in $\check \pi_1(X)$. Given an entourage $E$ of $X$, there is an $E^*$-bounded set $A_E\in\mathcal F$. Since $A_E$ is $E^*$-bounded, every element of $A_E$ has the same $E$-term, say $c_E$. Define a generalized loop $c=\{c_E\}$. To see that $c$ is a generalized path, suppose $F\subset E$ are entourages of $X$. Since $\mathcal F$ is a filter, there is an $\alpha\in A_F\cap A_E$. Then $c_E=\alpha_E$ and $c_F=\alpha_F$ so $c_F$ is $E$-homotopic to $c_E$. Finally, $\mathcal F$ converges to $c$ since $A_E\subset B(c,E^*)$. 
\endproof

\begin{corollary}
A subgroup $H$ of $\check \pi_1(X)$ is closed in $\check \pi_1(X)$ if and only if $H$ is complete.
\end{corollary}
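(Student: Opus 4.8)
The plan is to derive this corollary from the proposition just established (that $\check\pi_1(X)$ is complete) together with two standard facts about uniform spaces found in \cite{B}: first, that a closed subspace of a complete uniform space is complete; and second, that a complete subspace of a \emph{Hausdorff} uniform space is closed. Throughout, completeness of $H$ is understood with respect to the subspace uniformity it inherits from $\check\pi_1(X)$, so no separate comparison of uniformities is required. The entire argument is essentially a routine application of these facts once the hypotheses have been checked.

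For the forward implication I would argue as follows. Assume $H$ is closed in $\check\pi_1(X)$. Since $\check\pi_1(X)$ is complete by the preceding proposition, and a closed subspace of a complete uniform space is complete, $H$ is complete. Concretely, one can take a Cauchy filter in $H$, push it forward to a Cauchy filter in $\check\pi_1(X)$, which converges by completeness; closedness of $H$ forces the limit to lie in $H$, so the filter converges within $H$. This direction is immediate.

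For the converse, assume $H$ is complete and aim to show $\overline{H}\subset H$. To invoke the second standard fact I must first verify that $\check\pi_1(X)$ is Hausdorff, and this verification is the only genuine content of the proof. I would check it directly from the definition of the entourages $E^*$: if two generalized loops $\lambda,\mu$ satisfy $(\lambda,\mu)\in E^*$ for every entourage $E$ of $X$, then by definition $\lambda_E^{-1}\mu_E$ is $E$-homotopic to the trivial chain for each $E$, so $[\lambda_E]=[\mu_E]$ in $\pi_1(R(X,E))$ for every $E$, whence $\lambda=\mu$. Thus distinct generalized loops are separated by some $E^*$, and $\check\pi_1(X)$ is Hausdorff. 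With this in hand, $H$ being a complete subspace of a Hausdorff uniform space is closed, completing the equivalence.

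The main obstacle, such as it is, lies in this last paragraph: the ``complete $\Rightarrow$ closed'' direction genuinely requires the ambient space to be Hausdorff, so the crux is establishing Hausdorffness of $\check\pi_1(X)$ rather than any subtlety in the completeness arguments themselves. Everything else reduces to the cited uniform-space facts and the completeness statement proved immediately above.
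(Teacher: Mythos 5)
Your proof is correct and follows the same route the paper intends: the corollary is stated as an immediate consequence of the completeness of $\check\pi_1(X)$ via the two standard facts about closed/complete subspaces, and your verification that $\check\pi_1(X)$ is Hausdorff (needed for the ``complete $\Rightarrow$ closed'' direction, and used implicitly by the paper in Example \ref{ExampleNotClosed}) is sound, since $(\lambda,\mu)\in E^*$ for all $E$ forces $[\lambda_E]=[\mu_E]$ for all $E$ and hence $\lambda=\mu$.
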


See example \ref{ExampleNotClosed} for a space $X$ and a subgroup $H$ of $\check \pi_1(X)$ that is not complete.

\begin{proposition} \label{ImageIsH}
Suppose $X$ is Hausdorff and $H$ is a closed subgroup of $\check \pi_1(X)$. Suppose $p_H$ is a generalized uniform covering map. Then \newline $p_{H*}(\check\pi_1(GP(X)/H,[\alpha_0]_H))=H$.
\end{proposition}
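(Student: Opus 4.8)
The plan is to reduce everything to the behaviour of the endpoint map $p\colon GP(X)\to X$ together with its tautological lift, and then transport the conclusion across the quotient $q_H\colon GP(X)\to GP(X)/H$. Two preliminary facts set the stage. First, since $p_H$ is a generalized uniform covering map, Proposition~\ref{p_HIsAGUCM} gives that $X$ is locally uniform joinable chain connected, so the endpoint map $p\colon GP(X)\to X$ is itself a generalized uniform covering map (by the characterization of $GP(X,x_0)\to X$ recalled from \cite{Rips}); in particular $p$ has generalized path lifting. Second, since $X$ is Hausdorff and $H$ is closed, $GP(X)/H$ is Hausdorff by Proposition~\ref{GP(X)/H_Hausdorff}, so $p_H$ has uniqueness of generalized path lifts by Proposition~\ref{UniqOfGenPathLifts}. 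I will also use the functoriality $p_*=p_{H*}\circ q_{H*}$ coming from $p=p_H\circ q_H$, the fact that an induced map sends the endpoint of a generalized path to the image of that endpoint, and the orbit computations $[\alpha_0]_H=H\alpha_0=H$ and $[\lambda]_H=H\lambda$ for a generalized loop $\lambda$, together with the fact that $H\lambda=H$ precisely when $\lambda\in H$.

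The crux is the following tautological lifting property: for a generalized path $\omega\in GP(X,x_0)$, the lift $\widehat\omega$ of $\omega$ through $p$ starting at $\alpha_0$ may be taken to have endpoint $\omega$ itself, viewed as a point of $GP(X)$, and $p_*(\widehat\omega)=\omega$. I would produce $\widehat\omega$ directly from the chains of $\omega$: if $\omega_E=(x_0,x_1,\dots,x_n)$ represents $\omega$ at level $E$, let $\omega^{(k)}$ be the initial segment up to $x_k$, and let $\widehat\omega_{E^*}$ be the chain $(\omega^{(0)},\dots,\omega^{(n)})$ in $GP(X)$. Consecutive terms are $E^*$-close because $(\omega^{(k)})^{-1}\omega^{(k+1)}$ is $E$-homotopic to the two-term chain on its endpoints, so $\widehat\omega_{E^*}$ is an $E^*$-chain from $\alpha_0=\omega^{(0)}$ to $\omega=\omega^{(n)}$, and applying $p$ termwise returns $\omega_E$.

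I expect the main obstacle to be the coherence of this construction across entourages: that the initial segments $\omega^{(k)}$ assemble into honest generalized paths and that $\widehat\omega_{F^*}$ is $E^*$-homotopic to $\widehat\omega_{E^*}$ whenever $F\subset E$. This rests on matching the $E$-homotopy that identifies $\omega_F$ with $\omega_E$ against the induced homotopy of partial chains in $R(GP(X),E^*)$, and is exactly the bookkeeping developed for chains and Rips complexes in \cite{Rips}; I would either invoke that machinery or carry out the homotopy explicitly.

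Granting the tautological lift, both inclusions are short. For $p_{H*}(\check\pi_1(GP(X)/H,[\alpha_0]_H))\subseteq H$, take $\tilde\omega\in\check\pi_1(GP(X)/H,[\alpha_0]_H)$ and set $\omega=p_{H*}(\tilde\omega)\in\check\pi_1(X,x_0)$. The generalized path $q_{H*}(\widehat\omega)$ lifts $\omega$ through $p_H$, starts at $[\alpha_0]_H$, and ends at $[\omega]_H=H\omega$; since $\tilde\omega$ is another $p_H$-lift of $\omega$ starting at $[\alpha_0]_H$, uniqueness of generalized path lifts forces $q_{H*}(\widehat\omega)=\tilde\omega$, so their endpoints agree and $H\omega=[\alpha_0]_H=H$, i.e. $\omega\in H$. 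For the reverse inclusion, take $h\in H$ regarded as a generalized loop in $X$, lift it to $\widehat h$ with endpoint $h$, and put $\tilde h=q_{H*}(\widehat h)$. It starts at $[\alpha_0]_H$ and ends at $[h]_H=Hh=H=[\alpha_0]_H$, so $\tilde h\in\check\pi_1(GP(X)/H,[\alpha_0]_H)$, while $p_{H*}(\tilde h)=p_*(\widehat h)=h$; hence $h$ lies in the image.
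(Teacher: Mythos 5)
Your proposal is correct and follows essentially the same route as the paper: establish that $GP(X)/H$ is Hausdorff so $p_H$ has unique generalized path lifts, use the tautological lift of a generalized loop $\omega$ in $GP(X)$ ending at $\omega$ itself, push it down by $q_{H*}$, and conclude from uniqueness of lifts that membership in the image is equivalent to $[\omega]_H=[\alpha_0]_H$, i.e.\ $\omega\in H$. The only difference is that you spell out the construction of the tautological lift via initial segments, which the paper takes as given from the generalized path lifting property of the endpoint map.
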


\proof
By the previous proposition $GP(X)/H$ is Hausdorff. Therefore $p_H$ has unique generalized path lifting. Suppose $\alpha\in\check\pi_1(X)$. Now \newline $\alpha \in  p_{H*}(\check \pi_1(GP(X)/H,[\alpha_0]_H))$ if and only if $\alpha$ lifts to a generalized loop $\widetilde \alpha_H$ in $GP(X)/H$. In that case $\widetilde \alpha_H(1)=[\alpha_0]_H$. But $\alpha$ lifts to a generalized path $\widetilde \alpha$ in $GP(X)$ and $\widetilde \alpha (1)=\alpha$. Then $q_{H*}(\widetilde \alpha)$ is another lift of $\alpha$ in $GP(X)/H$ so $q_{H*}(\widetilde \alpha)=\widetilde \alpha_H$ and $\widetilde \alpha_H(1)=[\alpha]_H$. Therefore $[\alpha]_H=[\alpha_0]_H$ so $\alpha \in H$.
\endproof

\begin{proposition} \label{ExistenceGUCM}
Suppose a uniform space $X$ is locally uniform joinable chain connected. Then for each closed subgroup $H$ of $\check \pi_1(X)$ there is a Hausdorff, locally uniform joinable, and chain connected space $Z$ and a generalized uniform covering map $p_H:Z\to X$ such that ${p_H}_*(\check \pi_1(Z))=H$.
\end{proposition}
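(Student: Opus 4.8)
The plan is to take $Z=GP(X)/H$ together with the endpoint map $p_H\colon Z\to X$ from the construction preceding the statement, and to verify each required property in turn. Since $X$ is locally uniform joinable and chain connected, Proposition \ref{p_HIsAGUCM} immediately gives that $p_H$ is a generalized uniform covering map, so in particular it generates the uniform structure on $X$ and has chain lifting, approximate uniqueness of chain lifts, and generalized path lifting. What remains is to check that $Z$ is Hausdorff, chain connected, and locally uniform joinable, and that ${p_H}_*(\check\pi_1(Z))=H$.

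For the Hausdorff property and the computation of the image I would simply invoke the machinery already assembled for this purpose. Because $H$ is closed, Proposition \ref{GP(X)/H_Hausdorff} gives that $Z=GP(X)/H$ is Hausdorff; with $H$ closed and $p_H$ a generalized uniform covering map, Proposition \ref{ImageIsH} then yields ${p_H}_*(\check\pi_1(Z,[\alpha_0]_H))=H$. Both invocations rely on $X$ being Hausdorff, and this is exactly where separatedness is indispensable: by the same construction used after Proposition \ref{UniqOfGenPathLifts} (a generalized path ending at $x$ versus one extended by the chain $x,y$ with $x\neq y$ but $(x,y)\in E$ for all $E$), if $X$ is not Hausdorff then neither is $GP(X)/H$. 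I would therefore read the hypotheses with $X$ Hausdorff in force throughout this step.

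The chain connectedness and local uniform joinability of $Z$ I would obtain by descending the corresponding properties of $GP(X)$ along $q_H\colon GP(X)\to Z$. First, $GP(X)$ itself is chain connected and locally uniform joinable: any two generalized paths are joined in $GP(X)$ by the canonical traversal path (running one back to the constant path at $x_0$ and the other out), and if $(\alpha,\beta)\in E^*$ then appending the $E$-short generalized path $\alpha^{-1}\beta$ to $\alpha$ gives an $E^*$-short generalized path from $\alpha$ to $\beta=\alpha(\alpha^{-1}\beta)$, so $E^*$ witnesses local uniform joinability. Since the action of $H$ is uniformly equicontinuous (Lemma \ref{ActionEquicontinuous}), $q_H$ is a uniformly continuous surjection that generates the uniform structure on $Z$, with the sets $q_H(E^*)$ forming a basis of entourages. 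Chain connectedness passes to $Z$ as the uniformly continuous surjective image of a chain connected space. For local uniform joinability of $Z$, given $(z,z')\in q_H(E^*)$ choose representatives with $(\alpha,\beta)\in E^*$, join $\alpha$ and $\beta$ in $GP(X)$ by an $E^*$-short generalized path, and project; since a uniformly continuous map sends $E^*$-short generalized paths to $q_H(E^*)$-short ones, the image is a $q_H(E^*)$-short generalized path from $z$ to $z'$.

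The formal backbone is thus an assembly of Propositions \ref{p_HIsAGUCM}, \ref{GP(X)/H_Hausdorff}, and \ref{ImageIsH}, so the only genuinely substantive work is the descent of local uniform joinability from $GP(X)$ to $Z$; the point requiring care there is that shortness of a generalized path is preserved by the uniformly continuous quotient $q_H$, so that an $E^*$-short connection upstairs projects to a $q_H(E^*)$-short connection below. I expect the Hausdorff hypothesis on $X$ to be the real pressure point, since it is unavoidable in Propositions \ref{GP(X)/H_Hausdorff} and \ref{ImageIsH} and is precisely what breaks down in the non-separated case.
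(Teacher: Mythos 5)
Your proof is correct and takes essentially the same route as the paper's: $Z=GP(X)/H$, with Propositions \ref{p_HIsAGUCM}, \ref{GP(X)/H_Hausdorff}, and \ref{ImageIsH} carrying the main load; the only difference is that where you derive chain connectedness and local uniform joinability of $Z$ directly by descent from $GP(X)$ along $q_H$, the paper simply cites \cite[Proposition 4.4]{Rips} and \cite[Corollary 9]{BP}. Your remark that Hausdorffness of $X$ must be in force is apt, since the paper's own proof invokes \ref{GP(X)/H_Hausdorff} and \ref{ImageIsH} under that same unstated hypothesis.
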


\proof
According to \ref{p_HIsAGUCM}, the endpoint map $p_H:GP(X)/H\to X$ is a generalized uniform covering map. Then by \ref{ImageIsH}, $p_{H*}(\check\pi_1(GP(X)/H,[\alpha_0]_H))=H$. Notice $GP(X)/H$ is Hausdorff by \ref{GP(X)/H_Hausdorff}, locally uniform joinable by \cite[Proposition 4.4]{Rips}, and chain connected by \cite[Corollary 9]{BP}.
\endproof

Recall the following theorem from the classical setting of topological covering maps: Suppose $f:X\to Y$ and $g:Z\to Y$ are two covering maps. Suppose $X$, $Y$, and $Z$ are path connected and locally path connected. Then $f$ and $g$ are equivalent if and only if the groups $f_*(\pi_1(X))$ and $g_*(\pi_1(Z))$ are conjugate in $\pi_1(Y)$. \cite{Mun} Note saying that $f$ and $g$ are equivalent means that there is a homeomorphism $h:X\to Z$ with $f=g \circ h$. We wish to have an analog of this result for generalized uniform covering maps. 

\begin{lemma} \label{ConjugateLemma1ForCech}
Suppose $f:X\to Y$  is a map between uniform spaces with $X$ uniform joinable. Let $y_0\in Y$ and $x_0,x_1\in f^{-1}(y_0)$. Then $f_*(\check \pi_1(X,x_0))$ and $f_*(\check \pi_1(X,x_1))$ are conjugate.
\end{lemma}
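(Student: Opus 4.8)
The plan is to transpose to generalized loops the classical argument that two basepoints in the same fiber of a covering map produce conjugate image subgroups. The one genuinely new input is the hypothesis that $X$ is uniform joinable: this is exactly what furnishes a generalized path joining the two chosen points of the fiber, and from there the conjugacy is a formal consequence of the fact that $f_*$ respects concatenation and inversion of generalized paths.

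First I would use uniform joinability of $X$ to choose a generalized path $\gamma\in GP(X)$ from $x_0$ to $x_1$. Since $f(x_0)=f(x_1)=y_0$, the induced image $\delta:=f_*(\gamma)$ is a generalized path in $Y$ whose start and end are both $y_0$, so $\delta\in\check\pi_1(Y,y_0)$; this $\delta$ will be the conjugating element. Next I would introduce the conjugation map $\phi:\check\pi_1(X,x_0)\to\check\pi_1(X,x_1)$ defined by $\phi(\alpha)=\gamma^{-1}\alpha\gamma$. Here $\gamma^{-1}$ runs from $x_1$ to $x_0$, $\alpha$ is a generalized loop at $x_0$, and $\gamma$ returns from $x_0$ to $x_1$, so the concatenation is a well-defined generalized loop at $x_1$; the assignment $\beta\mapsto\gamma\beta\gamma^{-1}$ is a two-sided inverse, so $\phi$ is a group isomorphism carrying $\check\pi_1(X,x_0)$ bijectively onto $\check\pi_1(X,x_1)$.

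I would then invoke the functoriality of the induced map on generalized paths from \cite{Rips}, namely that $f_*$ preserves inverses and concatenations, to compute
\[
f_*(\phi(\alpha)) = f_*(\gamma^{-1}\alpha\gamma) = f_*(\gamma)^{-1}\,f_*(\alpha)\,f_*(\gamma) = \delta^{-1}\,f_*(\alpha)\,\delta .
\]
Letting $\alpha$ range over $\check\pi_1(X,x_0)$ and using that $\phi$ is onto $\check\pi_1(X,x_1)$ yields
\[
f_*(\check\pi_1(X,x_1)) = \delta^{-1}\,f_*(\check\pi_1(X,x_0))\,\delta ,
\]
which exhibits the two image subgroups as conjugate in $\check\pi_1(Y,y_0)$, as required.

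The routine parts are the bijectivity of $\phi$ and the endpoint bookkeeping for $\gamma^{-1}\alpha\gamma$; these are immediate once concatenation and inversion of generalized paths are taken as defined in the excerpt. The one point deserving care — more a fact to confirm than a real difficulty — is that $f_*:GP(X)\to GP(Y)$ genuinely intertwines the group operations when restricted to generalized loops, so that the displayed conjugation identity is valid at the level of $\check\pi_1$. This is precisely the homomorphism property of the induced map recorded in \cite{Rips}, and notably no estimate on entourages or finer uniform-structure argument is needed: the proof is purely algebraic once the joining path $\gamma$ has been produced.
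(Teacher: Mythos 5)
Your proposal is correct and follows essentially the same route as the paper: both use uniform joinability to produce a generalized path $\gamma$ from $x_0$ to $x_1$ and then apply the functoriality of $f_*$ with respect to concatenation and inversion to exhibit $f_*(\gamma)$ as the conjugating element. The only cosmetic difference is that you package the argument via the basepoint-change isomorphism $\alpha\mapsto\gamma^{-1}\alpha\gamma$, whereas the paper verifies the two inclusions directly.
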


\proof
This lemma is proved in the same manner that it is proved in the classical setting. Let $\alpha$ be a generalized path in $X$ from $x_0$ to $x_1$. Consider the induced function $f_*:GP(X)\to GP(Y)$. First suppose $\gamma_1$ is a generalized loop in $X$ at $x_1$. Then $f_*(\alpha) f_*(\gamma_1) f_*(\alpha)^{-1}=f_*(\alpha \gamma_1 \alpha^{-1})$ so $f_*(\alpha)f_*(\check \pi_1(X,x_1))f_*(\alpha)^{-1}\subset f_*(\check \pi_1(X,x_0))$. Now suppose $\gamma_0$ is a generalized loop in $X$ at $x_0$. Then $f_*(\gamma_0)=f_*(\alpha \alpha^{-1} \gamma_0 \alpha \alpha^{-1})=f_*(\alpha) f_*(\alpha^{-1} \gamma_1 \alpha) f_*(\alpha)^{-1}$ so $f_*(\check \pi_1(X,x_0))\subset f_*(\alpha)f_*(\check \pi_1(X,x_1))f_*(\alpha)^{-1}$.
\endproof

\begin{lemma} \label{ConjugateLemma2ForCech}
Suppose a map $f:X\to Y$ between uniform spaces has unique generalized path lifting and $X$ is uniform joinable. Let $y_0\in Y$ and $x_0\in f^{-1}(y_0)$. Given a subgroup $H$ of $\check \pi_1(Y,y_0)$ that is conjugate to $f_*(\check \pi_1(X,x_0))$, there is an $x_1\in X$ so that $f_*(\check \pi_1(X,x_1))=H$.
\end{lemma}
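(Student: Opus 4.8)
The plan is to realize the abstract conjugator by an honest lift and then feed it into the conjugation identity already extracted in \ref{ConjugateLemma1ForCech}. Since $H$ is conjugate to $f_*(\check\pi_1(X,x_0))$ in $\check\pi_1(Y,y_0)$, I would first fix a generalized loop $\mu\in\check\pi_1(Y,y_0)$ with $H=\mu^{-1}f_*(\check\pi_1(X,x_0))\mu$. Writing the conjugation in this order (rather than $\mu\,f_*(\cdots)\,\mu^{-1}$) is a harmless bookkeeping choice that makes the final equality come out on the nose; any conjugator can be replaced by its inverse to arrange this.

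Next I would use the existence half of unique generalized path lifting to lift $\mu$ to a generalized path $\alpha$ in $X$ starting at $x_0$, so that $f_*(\alpha)=\mu$. Put $x_1=\alpha(1)$. The endpoint is automatically in the correct fiber, since $f(x_1)=(f_*\alpha)(1)=\mu(1)=y_0$, whence $x_1\in f^{-1}(y_0)$. This is the essential use of the hypothesis on $f$: without generalized path lifting there is no reason the formal conjugator $\mu$ should be represented by an actual generalized path in $X$, and it is precisely the existence (not the uniqueness) part that is needed here.

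Finally, because $X$ is uniform joinable and $\alpha$ is a generalized path from $x_0$ to $x_1$, the computation carried out in the proof of \ref{ConjugateLemma1ForCech} applies verbatim and yields $f_*(\alpha)\,f_*(\check\pi_1(X,x_1))\,f_*(\alpha)^{-1}=f_*(\check\pi_1(X,x_0))$, that is, $f_*(\check\pi_1(X,x_1))=f_*(\alpha)^{-1}\,f_*(\check\pi_1(X,x_0))\,f_*(\alpha)$. Substituting $f_*(\alpha)=\mu$ turns the right-hand side into $\mu^{-1}f_*(\check\pi_1(X,x_0))\mu=H$, which is the desired conclusion for $x_1$.

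I expect the only real obstacle to be conceptual bookkeeping rather than computation: ensuring that the chosen lift actually projects to $\mu$ (so that $f_*(\alpha)=\mu$ exactly, not merely that $\alpha$ is \emph{some} lift of $\mu$), and keeping the direction of the conjugation consistent between the statement of the lemma and the identity borrowed from \ref{ConjugateLemma1ForCech}. Both are handled by the choice of $\mu$ in the first step, so once the lift is in hand the rest is a direct substitution.
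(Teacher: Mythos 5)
Your proof is correct and follows essentially the same route as the paper: lift the conjugating generalized loop to a generalized path $\alpha$ starting at $x_0$, take $x_1$ to be its endpoint, and apply the explicit conjugation identity from the proof of \ref{ConjugateLemma1ForCech} to conclude $f_*(\check\pi_1(X,x_1))=H$. Your extra care about the direction of conjugation and about using only the existence half of the lifting hypothesis matches what the paper does implicitly.
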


\proof
Suppose $\alpha H \alpha^{-1}=f_*(\check \pi_1(X,x_0))$ for some $\alpha \in \check \pi_1(Y,y_0)$. Let $\widetilde \alpha$ be the lift of $\alpha$ in $X$ and $x_1$ be the endpoint of $\widetilde \alpha$. By \ref{ConjugateLemma1ForCech}, $\alpha f_*(\check \pi_1(X,x_1)) \alpha^{-1}=f_*(\check \pi_1(X,x_0))$. Therefore $f_*(\check \pi_1(X,x_1))=H$.
\endproof

We say two maps $f:X\to Y$ and $g:Z\to Y$ are uniform equivalent if there is a uniform equivalence $h:X\to Z$ with $f=g\circ h$.

\begin{proposition} \label{EquivalenceCech}
Suppose $f:X\to Y$ and $g:Z\to Y$ are two generalized uniform covering maps. Suppose $X$ and $Z$ are Hausdorff, locally uniform joinable, and chain connected. Let $x_0\in X$, $y_0\in Y$, and $z_0\in Z$ with $f(x_0)=g(z_0)=y_0$. Then $f$ and $g$ are uniform equivalent if and only if the groups $f_*(\check \pi_1(X,x_0))$ and $g_*(\check \pi_1(Z,z_0))$ are conjugate in $\check \pi_1(Y,y_0)$.
\end{proposition}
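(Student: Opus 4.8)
The plan is to mirror the classical proof, using the lifting lemma \ref{LiftingLemmaForCech} to manufacture maps in both directions and the conjugacy lemmas \ref{ConjugateLemma1ForCech} and \ref{ConjugateLemma2ForCech} to handle the basepoint bookkeeping. Throughout I would use that a locally uniform joinable chain connected space is uniform joinable, and that a generalized uniform covering map with Hausdorff total space has unique generalized path lifting by \ref{UniqOfGenPathLifts}; since $X$ and $Z$ are Hausdorff, both $f$ and $g$ enjoy this property.

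For the forward direction, suppose $h\colon X\to Z$ is a uniform equivalence with $f=g\circ h$. Then $g(h(x_0))=f(x_0)=y_0$, and since $h$ and its inverse are uniformly continuous, $h_*$ is an isomorphism of uniform fundamental groups, so $f_*(\check\pi_1(X,x_0))=g_*\bigl(h_*(\check\pi_1(X,x_0))\bigr)=g_*(\check\pi_1(Z,h(x_0)))$. As $h(x_0)$ and $z_0$ both lie in $g^{-1}(y_0)$ and $Z$ is uniform joinable, \ref{ConjugateLemma1ForCech} shows $g_*(\check\pi_1(Z,h(x_0)))$ and $g_*(\check\pi_1(Z,z_0))$ are conjugate, whence so are $f_*(\check\pi_1(X,x_0))$ and $g_*(\check\pi_1(Z,z_0))$.

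For the converse, suppose the two groups are conjugate. First I would relocate the basepoint on $Z$: by \ref{ConjugateLemma2ForCech}, applied to $g$ (which has unique generalized path lifting and whose domain $Z$ is uniform joinable), there is a point $z_1\in g^{-1}(y_0)$ with $g_*(\check\pi_1(Z,z_1))=f_*(\check\pi_1(X,x_0))$. Now both inclusions required by the lifting lemma hold with equality, so I would apply \ref{LiftingLemmaForCech} twice. Lifting $f$ through the covering $g$ (with $Z$ Hausdorff and $X$ locally uniform joinable chain connected) yields a unique uniformly continuous $h\colon X\to Z$ with $g\circ h=f$ and $h(x_0)=z_1$; symmetrically, lifting $g$ through $f$ yields a unique uniformly continuous $k\colon Z\to X$ with $f\circ k=g$ and $k(z_1)=x_0$.

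It remains to show $h$ and $k$ are mutually inverse. Composing gives $f\circ(k\circ h)=g\circ h=f$ and $(k\circ h)(x_0)=x_0$, so $k\circ h$ and the identity $\mathrm{id}_X$ are both lifts of $f$ through the covering $f$ agreeing at $x_0$; the uniqueness clause of \ref{LiftingLemmaForCech} forces $k\circ h=\mathrm{id}_X$. The same argument based at $z_1$ gives $h\circ k=\mathrm{id}_Z$. Hence $h$ is a uniform equivalence with $g\circ h=f$, so $f$ and $g$ are uniform equivalent. The main thing to watch is the basepoint bookkeeping — choosing $z_1$ so that both lifting inclusions become equalities — together with the verification that the hypotheses of \ref{LiftingLemmaForCech} (Hausdorffness on the covering side, local uniform joinability and chain connectedness on the lifted side) are met in each of the three applications; the uniform continuity of the inverse map is then automatic, since $k$ is itself produced as a uniformly continuous lift.
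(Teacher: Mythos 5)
Your proof is correct and follows essentially the same route as the paper's: \ref{ConjugateLemma1ForCech} for the forward direction, then \ref{ConjugateLemma2ForCech} to relocate a basepoint so both lifting inclusions become equalities, followed by two applications of \ref{LiftingLemmaForCech} and the uniqueness clause to show the lifts are mutually inverse. The only cosmetic difference is that you move the basepoint on $Z$ where the paper moves it on $X$, and your write-up of the hypothesis-checking and of the identity-of-compositions step is a bit more careful than the paper's.
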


\proof
Suppose $f$ and $g$ are uniform equivalent with equivalence $h:X\to Z$ with $f=g\circ h$. Therefore $f_*(\check \pi_1(X,x_0))=g_*\circ h_*(\check \pi_1(X,x_0))=\newline g_*(\check \pi_1(Z,h(x_0)))$. By \ref{ConjugateLemma1ForCech} 
$g_*(\check \pi_1(Z,h(x_0)))$ is conjugate to $g_*(\check \pi_1(Z,z_0))$.

Now suppose the groups $f_*(\check \pi_1(X,x_0))$ and $g_*(\check \pi_1(Z,z_0))$ are conjugate. By \ref{ConjugateLemma2ForCech} there is an $x_1\in X$ with $f_*(\check \pi_1(X,x_1))=g_*(\check \pi_1(Z,z_0))$. Then by \ref{LiftingLemmaForCech} there is a lift $h:X\to Z$ of $f$ with respect to the uniform covering map $g$ and there is a lift $k:Z\to X$ of $g$ with respect to the uniform covering map $f$. Now $h\circ k$ is the identity on $X$ since it and the identity are both lifts of $f$ with respect to itself. Similarly $k\circ h$ is the identity on $Y$. Therefore $h$ is a uniform equivalence.
\endproof

We wish to classify generalized uniform covering maps of a locally uniform joinable chain connected space in terms of subgroups of its uniform fundamental group. We will only consider generalized uniform covering maps where the covering space is Hausdorff, locally uniform joinable, and chain connected.

\begin{proposition}
Suppose $f:X\to Y$ is a generalized uniform covering map. Suppose $X$ is Hausdorff. Let $x_0\in X$ and set $y_0=f(x_0)$. Then $f_*(\check\pi_1(X,x_0))$ is closed in $\check\pi_1(Y,y_0)$.
\end{proposition}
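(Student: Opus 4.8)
The plan is to show that $H := f_*(\check\pi_1(X,x_0))$ equals its own closure in $\check\pi_1(Y,y_0)$. As recorded in the proof of \ref{GP(X)/H_Hausdorff}, a generalized loop $\lambda\in\check\pi_1(Y,y_0)$ lies in the closure of $H$ precisely when $B(\lambda,F^*)\cap H$ is nonempty for every entourage $F$ of $Y$, that is, when for each such $F$ there is an element $\gamma(F)\in H$ with $(\lambda,\gamma(F))\in F^*$. So I would fix such a $\lambda$ in the closure of $H$ and aim to produce a generalized loop in $X$ at $x_0$ whose image under $f_*$ is $\lambda$.

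First I would lift. Since $f$ is a generalized uniform covering map it has generalized path lifting, so $\lambda$ lifts to a generalized path $\widetilde\lambda$ in $X$ starting at $x_0$; because $X$ is Hausdorff this lift is unique by \ref{UniqOfGenPathLifts}, and $f_*(\widetilde\lambda)=\lambda$. The entire problem then reduces to one claim: the endpoint of $\widetilde\lambda$ is $x_0$. Granting this, $\widetilde\lambda$ is a generalized loop at $x_0$, whence $\lambda=f_*(\widetilde\lambda)\in H$, so the closure of $H$ is contained in $H$.

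To pin down the endpoint I would show it is $E$-close to $x_0$ for every entourage $E$ of $X$ and then invoke Hausdorffness. Fix $E$. By \ref{ShortGP_Lifts} there is an entourage $F$ of $Y$ such that any two generalized paths in $X$ issuing from a common point are $E^*$-close once their $f_*$-images are $F^*$-close. Since $\lambda$ lies in the closure of $H$, for this particular $F$ there is $\gamma(F)\in H$ with $(\lambda,\gamma(F))\in F^*$; writing $\gamma(F)=f_*(\mu_F)$ for a generalized loop $\mu_F$ at $x_0$, the paths $\widetilde\lambda$ and $\mu_F$ both start at $x_0$ and have $F^*$-close images $\lambda$ and $\gamma(F)$. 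Lemma \ref{ShortGP_Lifts} then forces $\widetilde\lambda$ and $\mu_F$ to be $E^*$-close, so their endpoints are $E$-close; as $\mu_F$ is a loop, its endpoint is $x_0$, and hence the endpoint of $\widetilde\lambda$ is $E$-close to $x_0$. Letting $E$ range over all entourages of $X$ and using that $X$ is Hausdorff yields that the endpoint of $\widetilde\lambda$ equals $x_0$, completing the argument.

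The step I expect to be the crux is the passage from ``$F^*$-close downstairs'' to ``$E^*$-close upstairs,'' which is exactly the content of \ref{ShortGP_Lifts}. The only point requiring care is that the entourage $F$ produced there is dictated by $E$, so one must know that a witness $\gamma(F)\in H$ exists for that specific $F$; this is guaranteed precisely because $\lambda$ is assumed to lie in the closure of $H$, which supplies a witness for every entourage of $Y$.
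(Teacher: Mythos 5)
Your argument is correct and follows essentially the same route as the paper's own proof: lift the element of the closure to a generalized path starting at $x_0$, use Lemma \ref{ShortGP_Lifts} to compare this lift with a lift of a nearby element of $H$, conclude the endpoints are $E$-close for every entourage $E$, and invoke Hausdorffness. The only difference is cosmetic notation, so nothing further is needed.
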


\proof
Set $H=f(\check\pi_1(X,x_0))$. Suppose $\gamma\in\mathrm{Cl}(H)$. Lift $\gamma$ to a generalized path $\tilde\gamma$ in $X$ starting at $x_0$. It suffices to show that the endpoint of $\tilde\gamma$ is $x_0$. Given an entourage $E$ of $X$, choose an entourage $F$ of $X$ so that if $\alpha,\beta\in GP(X)$ with $(f_*(\alpha),f_*(\beta))\in F^*$, then $(\alpha,\beta)\in E^*$ (see \ref{UniqOfGenPathLifts}). Now there is an $h\in H$ with $(\gamma,h)\in F^*$. Let $\tilde h\in\check\pi_1(X)$ with $f_*(\tilde h)=h$. Then $(\tilde\gamma,\tilde h)\in E^*$. In particular, $(x,x_0)\in E$ where $x$ is the endpoint of $\tilde\gamma$. Since $X$ is Hausdorff, $x=x_0$.
\endproof

\begin{theorem} \label{Classification}
Suppose $X$ is Hausdorff, locally uniform joinable, and chain connected. 
Then there is a bijective correspondence between conjugacy classes of closed subgroups of $\check\pi_1(X)$ and generalized uniform covering maps over $X$ with the covering space being Hausdorff, locally uniform joinable, and chain connected.
\end{theorem}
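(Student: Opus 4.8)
The plan is to assemble the preceding results into a single bijection, since the substantive content has already been established. Fix a basepoint $x_0\in X$. To a generalized uniform covering map $f:W\to X$ whose covering space $W$ is Hausdorff, locally uniform joinable, and chain connected, I would associate the conjugacy class of the subgroup $f_*(\check\pi_1(W,w_0))$ of $\check\pi_1(X,x_0)$, where $w_0$ is any point of the fiber $f^{-1}(x_0)$. The first thing to check is that this fiber is nonempty: since $W$ is chain connected and $X$ is locally uniform joinable chain connected, hence uniform joinable, any point of $X$ is the endpoint of a generalized path based at $f(w)$ for some $w\in W$, and lifting this generalized path via generalized path lifting shows $f$ is surjective, so $f^{-1}(x_0)\neq\emptyset$.

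Next I would verify that this assignment descends to a well-defined map $\Phi$ from uniform equivalence classes of such covering maps to conjugacy classes of closed subgroups of $\check\pi_1(X,x_0)$. Three points are needed. First, $f_*(\check\pi_1(W,w_0))$ is genuinely closed: this is exactly the content of the proposition immediately preceding, applied to $f$, using that $W$ is Hausdorff. Second, the conjugacy class is independent of the choice of $w_0\in f^{-1}(x_0)$: as $W$ is locally uniform joinable and chain connected it is uniform joinable, so Lemma \ref{ConjugateLemma1ForCech} shows that different basepoints in the fiber produce conjugate subgroups. Third, if $f:W\to X$ and $g:W'\to X$ are uniform equivalent, then one direction of Proposition \ref{EquivalenceCech} gives that their associated subgroups are conjugate, so $\Phi$ is constant on uniform equivalence classes.

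For injectivity of $\Phi$, suppose $\Phi([f])=\Phi([g])$, i.e.\ $f_*(\check\pi_1(W,w_0))$ and $g_*(\check\pi_1(W',w_0'))$ are conjugate in $\check\pi_1(X,x_0)$. The converse direction of Proposition \ref{EquivalenceCech} then furnishes a uniform equivalence $h:W\to W'$ with $f=g\circ h$, so $[f]=[g]$. For surjectivity, given a conjugacy class of closed subgroups, choose a representative $H$; Proposition \ref{ExistenceGUCM} produces a generalized uniform covering map $p_H:GP(X)/H\to X$ whose total space is Hausdorff, locally uniform joinable, and chain connected, with ${p_H}_*(\check\pi_1(GP(X)/H))=H$. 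Hence $\Phi([p_H])$ is the chosen conjugacy class, which proves surjectivity and completes the bijection.

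The argument is therefore essentially organizational, and I do not expect a serious obstacle requiring a new idea. The one place that demands genuine care — and the step I would treat as the crux — is matching the basepoint conventions across Propositions \ref{EquivalenceCech} and \ref{ExistenceGUCM} with the basepoint-free notion of uniform equivalence: I must make sure that the passage to \emph{conjugacy classes} correctly absorbs both the freedom in choosing $w_0$ within a fiber and the fact that a uniform equivalence need not respect basepoints. Verifying nonemptiness of the fibers, so that $\Phi$ is defined on every covering map in the stated class, is the only auxiliary fact not already packaged in a cited result.
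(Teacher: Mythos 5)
Your proposal is correct and follows essentially the same route as the paper: identify each covering map with the conjugacy class of $f_*(\check\pi_1(Z,z_0))$, invoke the preceding proposition for closedness, Proposition \ref{EquivalenceCech} for well-definedness and injectivity, and Proposition \ref{ExistenceGUCM} for surjectivity. Your additional checks (nonemptiness of the fiber, basepoint independence via Lemma \ref{ConjugateLemma1ForCech}) merely make explicit details the paper leaves implicit.
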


\proof
Let $x_0\in X$ and $f:Z\to X$ be a generalized uniform covering map where $Z$ is Hausdorff, locally uniform joinable and chain connected. We identify $f$ with the conjugacy class of the closed subgroup $H=f_*(\check\pi_1(Z,z_0))$ where $z_0\in f^{-1}(x_0)$. This identification is well defined and bijective by \ref{EquivalenceCech}. The identification is surjective by \ref{ExistenceGUCM}. 
\endproof

\section{Uniform covering maps relative to subgroups of the uniform fundamental group}

We already know that the endpoint map $\widetilde X\to X$ is a uniform covering map if and only if $X$ is path connected, uniformly locally path connected, and uniformly semilocally simply connected \cite{Rips}. Let us see when the endpoint map $GP(X)\to X$ is a uniform covering map.

\begin{definition}
A uniform space is simply uniform joinable if every generalized loop is trivial.
\end{definition}

Equivalently, a space $X$ is simply uniform joinable if $\check\pi_1(X,x_0)=1$ for each $x_0\in X$.

\begin{definition}
A uniform space is semilocally simply uniform joinable if there is an
entourage $E$ so that a generalized loop is trivial if its $E$-term is trivial.
\end{definition}

Equivalently, a space $X$ is semilocally simply uniform joinable if there is an entourage $E$ so that the projection $\check{\pi}_1(X,x_0)\to \pi_1(R(X,E),x_0)$ is a monomorphism for all $x_0\in X$.

\begin{proposition}
Suppose $f:X\to Y$ is a generalized uniform covering map with $X$ Hausdorff. If $Y$ is semilocally simply uniform joinable then $X$ is semilocally simply uniform joinable.
\end{proposition}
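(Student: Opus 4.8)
The plan is to exhibit a single entourage of $X$ witnessing semilocal simple uniform joinability, obtained by pulling back the witnessing entourage of $Y$. Since $Y$ is semilocally simply uniform joinable, I fix an entourage $E_Y$ of $Y$ with the property that a generalized loop in $Y$ is trivial whenever its $E_Y$-term is trivial. Because $f$ is uniformly continuous, $f^{-1}(E_Y)$ is an entourage of $X$; write $E_X=f^{-1}(E_Y)$. I claim $E_X$ witnesses that $X$ is semilocally simply uniform joinable, and the whole argument reduces to checking this.

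So let $\alpha$ be a generalized loop in $X$ based at a point $x_0$ whose $E_X$-term $\alpha_{E_X}$ is trivial, that is, $E_X$-homotopic rel endpoints to the constant chain at $x_0$; the goal is to show $\alpha$ is trivial. First I would push $\alpha$ forward and analyze $(f_*\alpha)_{E_Y}$, the $E_Y$-term of the generalized loop $f_*(\alpha)$ based at $y_0=f(x_0)$. Since $f(E_X)=f(f^{-1}(E_Y))\subset E_Y$, the map $f$ sends $E_X$-bounded sets to $E_Y$-bounded sets, hence induces a simplicial map $R(X,E_X)\to R(Y,E_Y)$; this carries the null-homotopic edge path of $\alpha_{E_X}$ to the edge path of $f(\alpha_{E_X})$, so $f(\alpha_{E_X})$ is $E_Y$-homotopic to the constant chain at $y_0$. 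Because the induced map $f_*$ of \cite{Rips} is computed by applying $f$ to chains, $(f_*\alpha)_{E_Y}$ is $E_Y$-homotopic to $f(\alpha_{E_X})$, and is therefore trivial.

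At this point the hypothesis on $Y$ applies directly: the $E_Y$-term of $f_*(\alpha)$ is trivial, so $f_*(\alpha)$ is the constant generalized loop at $y_0$. To transfer this conclusion back to $X$ I would use that $f$ has uniqueness of generalized path lifts, which holds because $X$ is Hausdorff (Proposition \ref{UniqOfGenPathLifts}). Both $\alpha$ and the constant generalized loop at $x_0$ start at $x_0$ and map under $f_*$ to the constant loop at $y_0$, so uniqueness of lifts forces them to coincide; hence $\alpha$ is trivial. As $x_0$ and $\alpha$ were arbitrary, $E_X$ works uniformly and $X$ is semilocally simply uniform joinable.

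I expect the main obstacle to be the compatibility bookkeeping in the second paragraph: pinning down, from the definition of $f_*:GP(X)\to GP(Y)$ in \cite{Rips}, that the $E_Y$-term of $f_*(\alpha)$ is represented by $f(\alpha_{E_X})$ — equivalently, that $f$ intertwines the projections onto the Rips complexes so that the composite $\pi_1(R(X,E_X))\to\pi_1(R(Y,E_Y))$ factors through $f_*$ — together with verifying that the simplicial map induced by $f$ really does carry an $E_X$-null-homotopy to an $E_Y$-null-homotopy. Once that functoriality is in hand, both the choice $E_X=f^{-1}(E_Y)$ and the concluding lifting step are routine.
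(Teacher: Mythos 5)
Your proposal is correct and follows essentially the same route as the paper: pull back the witnessing entourage to $f^{-1}(E_Y)$, observe that triviality of the $f^{-1}(E_Y)$-term forces triviality of the $E_Y$-term of the pushforward (via the simplicial map $R(X,f^{-1}(E_Y))\to R(Y,E_Y)$), apply the hypothesis on $Y$, and conclude by uniqueness of generalized path lifts from Proposition \ref{UniqOfGenPathLifts} using that $X$ is Hausdorff. The extra functoriality bookkeeping you flag is exactly what the paper leaves implicit in the line ``Then $f(c)_E$ is trivial so $f(c)$ is trivial.''
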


\proof
Suppose $Y$ is semilocally simply uniform joinable with entourage $E$ so that any generalized loop is trivial if its $E$-term is trivial. Suppose $c$ is a generalized loop at $x_{0}$ with $c_{f^{-1}(E)}$
trivial. Then $f(c)_E$ is trivial so $f(c)$ is trivial. But then $c$ and
the constant generalized path at $x_0$ are two lifts of the same
generalized path so they must be equal since $X$ is Hausdorff.
\endproof

\begin{proposition} \label{ExistenceOfUniversalUCMForCech}
The following are equivalent.
\begin{itemize}
\item[1.] The projection $\pi_X:GP(X,x_0)\to X$ is a uniform covering map for some $x_0\in X$.
\item[2.] The projection $\pi_X:GP(X,x_0)\to X$ is a uniform covering map for all $x_0\in X$.
\item[3.] $X$ is locally uniform joinable, chain connected, and semilocally simply uniform joinable.
\end{itemize}
\end{proposition}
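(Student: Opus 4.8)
The plan is to prove the cycle of implications $(2)\Rightarrow(1)\Rightarrow(3)\Rightarrow(2)$. The implication $(2)\Rightarrow(1)$ is immediate (any nonempty $X$ furnishes a basepoint), so the content lies in the other two.

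For $(1)\Rightarrow(3)$, suppose $\pi_X\colon GP(X,x_0)\to X$ is a uniform covering map for some $x_0$. I would first extract chain connectedness for free: since $\pi_X$ generates the uniform structure on $X$, the image $\pi_X(E)$ of each entourage $E$ of $GP(X,x_0)$ is an entourage of $X$ and hence contains the diagonal, which forces $\pi_X$ to be surjective. As the image of $\pi_X$ is exactly the set of endpoints of generalized paths starting at $x_0$, surjectivity says $X$ is uniform joinable, and therefore chain connected. With chain connectedness in hand, Proposition \ref{GP(X)toXGenerates} upgrades ``$\pi_X$ generates the uniform structure'' to ``$X$ is locally uniform joinable.'' It remains to witness semilocal simple uniform joinability. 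Since $\pi_X$ has unique chain lifting there is a transverse entourage, and because the sets $E^*$ form a basis I may take it to be of the form $G^*$ for some entourage $G$ of $X$. I would then check that $G$ witnesses the property at $x_0$: if $c$ is a generalized loop at $x_0$ whose $G$-term is trivial, then the constant path and $c$ are $G^*$-close and share the endpoint $x_0$, so transversality forces $c$ to be trivial. Finally I would promote this to all basepoints by conjugating an arbitrary loop into one based at $x_0$ along a generalized path supplied by uniform joinability, noting that conjugation preserves triviality of the $G$-term.

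For $(3)\Rightarrow(2)$, fix an arbitrary basepoint $x_0$. Since $X$ is locally uniform joinable and chain connected, the endpoint map $\pi_X$ is a generalized uniform covering map, so in particular it generates the uniform structure on $X$ and has chain lifting. The only missing ingredient for a uniform covering map is uniqueness of chain lifts, and this is precisely where semilocal simple uniform joinability enters. Letting $E$ be the witnessing entourage, I would show $E^*$ is transverse to $\pi_X$: if $(\alpha,\beta)\in E^*$ with $\pi_X(\alpha)=\pi_X(\beta)=x$, then $\alpha^{-1}\beta$ is a generalized loop at $x$ whose $E$-term $\alpha_E^{-1}\beta_E$ is $E$-homotopic to the trivial chain, so the hypothesis forces $\alpha^{-1}\beta$ to be trivial and hence $\alpha=\beta$. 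A transverse entourage yields uniqueness of chain lifts, and together with chain lifting and the generating property this makes $\pi_X$ a uniform covering map; since $x_0$ was arbitrary, $(2)$ follows.

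The main obstacle, and the conceptual heart of the argument, is the precise dictionary between transversality of the entourage $E^*$ on $GP(X,x_0)$ and semilocal simple uniform joinability of $X$: one must verify that $E^*$-closeness of two generalized paths with a common endpoint is exactly the assertion that an associated generalized loop has trivial $E$-term. The only other point requiring care is the basepoint bookkeeping in $(1)\Rightarrow(3)$ — deducing the ``for all basepoints'' form of semilocal simple uniform joinability from data attached to the single basepoint $x_0$ — which the uniform joinability established earlier renders routine through conjugation.
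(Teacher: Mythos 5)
Your proposal is correct and follows essentially the same route as the paper: surjectivity and Proposition \ref{GP(X)toXGenerates} give chain connectedness and local uniform joinability, a basic transverse entourage $E_0^*$ is traded back and forth against the witnessing entourage for semilocal simple uniform joinability via the identity $(\alpha,\alpha\gamma)\in E_0^*$, and the remaining properties of a uniform covering map are checked directly. The only cosmetic differences are that you obtain chain lifting by citing the generalized-uniform-covering-map characterization rather than rejoining $\pi_X(\alpha)$ and $y$ by an $E$-short generalized path as the paper does, and you reduce arbitrary basepoints to $x_0$ by explicit conjugation where the paper folds that step into the $(\alpha,\alpha\gamma)$ computation.
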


\proof
\textrm{  }
\par\noindent
1. $\Rightarrow$ 3. Since $\pi_X$ is surjective, $X$ is uniform joinable. Since $\pi_X$ generates the structure on $X$, by \ref{GP(X)toXGenerates}, $X$ is locally uniform joinable. To see that $X$ is semilocally simply uniform joinable, let $E_0^*$ be a basic transverse entourage of $X$. Suppose $\gamma$ is a generalized loop in $X$ at a point $x\in X$ whose $E_0$-term is trivial. Let $\alpha$ be a generalized path from $x_0$ to $x$. Then $(\alpha,\alpha \gamma)\in E_0^*$ and $\pi_X(\alpha)=\pi_X(\alpha \gamma)$ so $\alpha=\alpha \gamma$ and $\gamma$ is trivial.

\vspace{.1in}\noindent 3. $\Rightarrow$ 2. Let $x_0\in X$. By \ref{GP(X)toXGenerates} $\pi_X:GP(X,x_0)\to X$ generates the structure on $X$ (uniform joinability and local uniform joinability are used here). Since $X$ is semilocally simply uniform joinable there is an entourage $E_0$ of $X$ such that any generalized loop in $X$ is trivial if its $E_0$-term is trivial. To see that $E_0^*$ is transverse to $\pi_X$, suppose $(\alpha,\beta)\in E_0^*$ with $\pi_X(\alpha)=\pi_X(\beta)$. Then $\alpha^{-1}\beta$ is a generalized loop in $X$ whose $E$-term is trivial so it is trivial. Therefore $\alpha=\beta$. Finally, to see that $\pi_X$ has chain lifting, let $E^*$ be a basic entourage of $GP(X,x_0)$ and let $F\subset E$ be an entourage of $X$ so that any two $x,y\in X$ with $(x,y)\in F$ can be joined by an $E$-short generalized path. Suppose $\alpha\in GP(X,x_0)$ and $(\pi_X(\alpha),y)\in F$. Join $\pi_X(\alpha)$ and $y$ by an $E$-short generalized path $\beta$. Then $(\alpha, \alpha \beta)\in E^*$.
\endproof

Now we wish to see that for a locally uniform joinable, chain connected, and semilocally simply uniform joinable space $X$, for each subgroup $H$ of $\check \pi_1(X,x_0)$ there is a uniform covering map $p_H:Z\to X$ with $p_*(\check \pi_1(Z,z_0))=H$.

\begin{proposition} \label{ClosedSubgroups}
If $X$ is semilocally simply uniform joinable then every subgroup $H$ of $\check\pi_1(X)$ is complete.
\end{proposition}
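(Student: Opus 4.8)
The plan is to show that the hypothesis forces the uniform structure on $\check\pi_1(X)$ to be discrete, after which completeness of every subgroup is immediate. First I would invoke semilocal simple uniform joinability to fix an entourage $E_0$ of $X$ with the property that any generalized loop whose $E_0$-term is trivial is itself trivial. The key claim I would establish is that the basic entourage $E_0^*$, restricted to the set $\check\pi_1(X,x_0)$ of generalized loops, coincides with the diagonal.

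To verify this claim I would take a pair $(\alpha,\beta)\in E_0^*$ with $\alpha,\beta$ generalized loops based at $x_0$. By the definition of $E_0^*$ the chain $\alpha_{E_0}^{-1}\beta_{E_0}$ is $E_0$-homotopic to the constant chain at $x_0$. Setting $\gamma=\alpha^{-1}\beta$, which is again a generalized loop, and using that concatenation of generalized paths is computed termwise, I obtain that $\gamma_{E_0}$ is $E_0$-homotopic to the trivial chain, i.e. $\gamma$ has trivial $E_0$-term. Semilocal simple uniform joinability then yields $\gamma=1$, hence $\alpha=\beta$. Since $E_0^*$ always contains the diagonal, this shows $E_0^*=\Delta$ on $\check\pi_1(X)$, so the diagonal is itself a basic entourage and the uniform structure on $\check\pi_1(X)$ is discrete.

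I would then conclude formally. In a discrete uniform space any Cauchy filter contains a $\Delta$-bounded set, which is a single point, so every Cauchy filter converges to that point; in particular any Cauchy filter in a subgroup $H$ converges to a point of $H$, and thus $H$ is complete. Equivalently, every subset of a discrete uniform space is closed, so $H$ is closed in $\check\pi_1(X)$ and hence complete by the preceding corollary — either route closes the argument.

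I do not expect a serious obstacle here; the only delicate point is the bookkeeping in the key claim, namely checking that $(\alpha^{-1}\beta)_{E_0}$ is $E_0$-homotopic to $\alpha_{E_0}^{-1}\beta_{E_0}$ so that the triviality hypothesis can be applied to the single generalized loop $\gamma$ rather than to $\alpha$ and $\beta$ separately. Once $E_0^*$ is seen to collapse to the diagonal, discreteness of $\check\pi_1(X)$ and hence completeness of every subgroup follow with no further work.
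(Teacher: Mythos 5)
Your proof is correct, and its engine is the same as the paper's: a pair of generalized loops that is $E_0^*$-close has a quotient with trivial $E_0$-term, hence is equal by semilocal simple uniform joinability. The paper applies this observation only to the pair $(\alpha, h(E_0))$, where $\alpha$ is a point of the closure of $H$ and $h(E_0)\in H$ is an $E_0^*$-approximant, concluding directly that $\alpha=h(E_0)\in H$ and hence that $H$ is closed (equivalently complete, by the corollary following the completeness of $\check\pi_1(X)$). You instead apply it to an arbitrary pair of loops and extract the stronger, cleaner intermediate statement that $E_0^*$ restricts to the diagonal on $\check\pi_1(X)$, i.e.\ that the uniform structure on $\check\pi_1(X)$ is discrete; completeness of every subgroup is then automatic. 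This is a mild repackaging rather than a different route, but it is a worthwhile one: discreteness of $\check\pi_1(X)$ is a reusable fact that explains at a glance why no closedness hypothesis on $H$ is needed in the semilocally simply uniform joinable setting, in contrast to Proposition \ref{GP(X)/H_Hausdorff}. Your bookkeeping step --- that $(\alpha^{-1}\beta)_{E_0}$ is represented by $\alpha_{E_0}^{-1}\beta_{E_0}$ --- is unproblematic since concatenation and inversion of generalized paths are defined termwise.
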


\proof
Suppose $\alpha$ is a generalized loop in $X$ at the basepoint of $X$ and for each entourage $E$ of $X$ there is an $h(E)\in H$ with $(\alpha,h(E))\in E^*$. There is an entourage $E_0$ of $X$ so that any generalized loop is trivial if its $E_0$-term is trivial. Therefore $\alpha h(E_0)^{-1}$ is trivial, i.e., $\alpha=h(E_0)\in H$.
\endproof

We now see an example of a space $X$ and a subgroup $H$ of $\check\pi_1(x)$ that is not complete.

\begin{example}\label{ExampleNotClosed}
According to the previous proposition, our space $X$ must be non semilocally simply uniform joinable. Consider the Hawaiian Earing $HE$. Let $HE$ be the union of circles $C_n$ of diameter $1/n$ in $\mathbb{R}^2$ with center $(0,1/2n)$ for $n\in \mathbb N$. Notice $HE$ is not semilocally uniform joinable since given a basic entourage $E_\epsilon=\{(x,y):d(x,y)\leq \epsilon\}$, if $n>1/\epsilon$, a generalized path that runs around the circle $C_n$ has trivial $E_\epsilon$ term but is not trivial. 

Given $n\in \mathbb N$, let $\alpha_n$ be the generalized path that runs around the circle $C_n$, say clockwise. Let $H$ be the subgroup of $\check\pi_1(HE)$ generated by $\{\alpha_n:n\in \mathbb N\}$. Consider the sequence of generalized paths $\beta_n=\Pi_{i=1}^n \alpha_i$. This sequence converges to a generalized path $\beta$ whose $E_\epsilon$ terms are defined to be the $E_\epsilon$ term of $\beta_n$ where $n$ is the smallest integer such that $n>1/\epsilon$. Note $\beta$ is the generalized path associated with an infinite product of paths known to be a path in $HE$.

Now $\beta\notin H$ since given any finite product of $\alpha_i$'s, there is an $E_\epsilon$ term of $\beta$ that goes around a $C_n$ not addressed by an $\alpha_i$. Since $\check\pi_1(HE)$ is Hausdorff, the limit of the sequence $\{\beta_n\}$ is unique so $H$ is not complete. Thanks to Bob Daverman for suggesting this example.
\end{example}

\begin{theorem} \label{ExistenceUCMSforGUP}
Suppose a uniform space $X$ is Hausdorff, locally uniform joinable, chain connected, and semilocally simply uniform joinable. Suppose $x_0\in X$. Then for each subgroup $H$ of $\check \pi_1(X,x_0)$ there is a Hausdorff, locally uniform joinable, and chain connected space $Z$, a uniform covering map $p_H:Z\to X$ and a point $z_0\in p_H^{-1}(x_0)$ such that $p_{H*}(\check \pi_1(Z,z_0))=H$.
\end{theorem}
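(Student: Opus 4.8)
The plan is to build $Z$ as a quotient of $GP(X)$ exactly as in the generalized setting and then to promote the resulting generalized uniform covering map to a genuine uniform covering map, the extra leverage coming from semilocal simple uniform joinability.

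First I would note that the hypotheses include that $X$ is locally uniform joinable and chain connected and that $X$ is semilocally simply uniform joinable. By \ref{ClosedSubgroups} the latter forces every subgroup of $\check\pi_1(X,x_0)$ to be complete, hence closed (by the equivalence of closed and complete for subgroups of $\check\pi_1(X)$). In particular $H$ is closed, so \ref{ExistenceGUCM} applies and yields $Z=GP(X)/H$, which is Hausdorff, locally uniform joinable, and chain connected, together with the endpoint map $p_H:Z\to X$, a generalized uniform covering map. Taking $z_0=[\alpha_0]_H$ with $\alpha_0$ the constant generalized path at $x_0$, \ref{ImageIsH} gives $p_{H*}(\check\pi_1(Z,z_0))=H$. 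Thus everything in the conclusion is already in hand except the claim that $p_H$ is a \emph{uniform} covering map rather than merely a generalized one.

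To close that gap I would verify unique chain lifting for $p_H$. Generating the uniform structure and chain lifting both come for free, since $p_H$ is a generalized uniform covering map; the only missing ingredient is uniqueness of chain lifts. I would obtain it from the factorization $p=p_H\circ q_H$, where $p:GP(X,x_0)\to X$ is the endpoint map and $q_H:GP(X)\to Z$ is the quotient projection. Because $X$ is locally uniform joinable, chain connected, and semilocally simply uniform joinable, \ref{ExistenceOfUniversalUCMForCech} tells us $p$ is itself a uniform covering map, so $p$ has uniqueness of chain lifts. The action of $H$ is equicontinuous, so $q_H$ generates the uniform structure on $Z$; hence \ref{Composition}(3) transfers uniqueness of chain lifts from $p=p_H\circ q_H$ down to $p_H$, and $p_H$ is a uniform covering map.

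I do not anticipate a genuine obstacle here: the construction and the identification of the image subgroup are carried over verbatim from the generalized case, and the whole point of the extra semilocal hypothesis is precisely to make the top map $p$ a uniform covering map via \ref{ExistenceOfUniversalUCMForCech}. The one step requiring care is the bookkeeping in applying \ref{Composition}(3), namely checking that its standing hypothesis—that the first factor $q_H$ generates the uniform structure on the intermediate space $Z$—is indeed met, which it is by equicontinuity of the $H$-action.
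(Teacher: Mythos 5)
Your proposal is correct and follows essentially the same route as the paper: both rest on the factorization $\pi_X=p_H\circ q_H$, with $\pi_X$ a uniform covering map by \ref{ExistenceOfUniversalUCMForCech}, $q_H$ generating the structure by equicontinuity of the $H$-action, and \ref{Composition} transferring the covering properties to $p_H$, with \ref{ClosedSubgroups}, \ref{GP(X)/H_Hausdorff}, and \ref{ImageIsH} supplying the remaining conclusions. The only cosmetic difference is that you obtain generation of the structure and chain lifting by first invoking \ref{ExistenceGUCM}, whereas the paper gets all three properties directly from \ref{Composition}.
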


\proof
Consider the endpoint map $p_H:GP(X,x_0)/H\to X$. According to \ref{ExistenceOfUniversalUCMForCech}, $\pi_X:GP(X,x_0)\to X$ is a uniform covering map. According to \ref{ActionEquicontinuous}, $q_H:GP(X,x_0)\to GP(X,x_0)/H$ generates the uniform structure on $GP(X,x_0)/H$. Then, since $\pi_X=p_H\circ q_H$, $p_H$ is a uniform covering map by \ref{Composition}. Notice $GP(X)/H$ is locally uniform joinable by \cite[Proposition 4.4]{Rips}, and chain connected by \cite[Corollary 9]{BP}. Now $H$ is closed by \ref{ClosedSubgroups}. Then $GP(X)/H$ is Hausdorff by \ref{GP(X)/H_Hausdorff}. Also, $p_{H*}(\check\pi_1(GP(X,x_0),\alpha_0))=H$ by \ref{ImageIsH}.
\endproof

We wish to classify uniform covering maps of locally uniform joinable, chain connected, and semilocally simply uniform joinable spaces. We will only consider uniform covering maps where the covering space is chain connected. Note that we do not need to explicitly assume that the covering space is Hausdorff or locally uniform joinable since these properties are inherited from the base space via uniform covering maps.

\begin{lemma}
Suppose $f:X\to Y$ is a uniform covering map.
\begin{itemize}
\item[1.] If $Y$ is Hausdorff then $X$ is Hausdorff.
\item[2.] If $Y$ is locally uniform joinable then $X$ is locally uniform joinable.
\end{itemize}
\end{lemma}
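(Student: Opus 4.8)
The plan is to handle the two parts separately, in each case using the transverse entourage supplied by unique chain lifting together with the fact that $f$ generates the structure on $Y$.

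For part 1, I would use the standard fact that a uniform space is Hausdorff precisely when the intersection of all its entourages is the diagonal; this is the same characterization already in play in the excerpt's earlier Hausdorffness arguments. Let $E_0$ be an entourage of $X$ transverse to $f$, and suppose $(x,y)$ lies in every entourage of $X$; I want $x=y$. Since $f$ generates the uniform structure on $Y$, the sets $f(E)$ form a basis for the uniformity of $Y$, and $(x,y)\in E$ gives $(f(x),f(y))\in f(E)$; hence $(f(x),f(y))$ lies in every basic, and so in every, entourage of $Y$, and as $Y$ is Hausdorff we get $f(x)=f(y)$. But $(x,y)\in E_0$ with $f(x)=f(y)$, so transversality forces $x=y$. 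Thus the entourages of $X$ intersect in the diagonal and $X$ is Hausdorff; this part uses no homotopy theory and should be only a few lines.

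For part 2, I would first reduce to small entourages: if $E\subseteq E'$ then any $E$-short generalized path is $E'$-short (since $\gamma_E$ is $E'$-homotopic to $\gamma_{E'}$ and $E$-homotopic to the two-term chain at its endpoints), so an $F\subseteq E$ witnessing joinability at level $E$ also witnesses it at level $E'$. Hence I may assume $E\subseteq E_0$ with $E_0$ transverse. Using chain lifting, I choose an entourage $E_1\subseteq E$ of $X$ so that $f(E_1)$-chains in $Y$ starting at $f(x)$ lift to $E$-chains in $X$ starting at $x$. Because $Y$ is locally uniform joinable, for the entourage $f(E_1)$ of $Y$ I pick $G\subseteq f(E_1)$ so that $G$-close points of $Y$ are joined by an $f(E_1)$-short generalized path, and I set $F=E_1\cap f^{-1}(G)$, an entourage of $X$ contained in $E$. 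For $(x,y)\in F$ the images $f(x),f(y)$ are $G$-close, hence joined by an $f(E_1)$-short generalized path $\delta$ in $Y$, and the remaining task is to lift $\delta$ to an $E$-short generalized path $\gamma$ in $X$ running from $x$ to $y$.

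The hard part will be exactly this lifting step. The concrete approach would be to lift chain homotopies through the simplicial maps $R(X,E_1)\to R(Y,f(E_1))$ induced by $f$: the homotopy rel endpoints in $R(Y,f(E_1))$ showing that $\delta_{f(E_1)}$ is $f(E_1)$-homotopic to the edge $f(x),f(y)$ should lift, via chain lifting and the uniqueness coming from $E_0$, to a homotopy in $R(X,E_1)$ starting at the edge $x,y$ and ending at a chain lifting $\delta_{f(E_1)}$; this simultaneously pins the endpoint at $y$ and makes the lift $E_1$-short, hence $E$-short. Assembling these lifts compatibly over a cofinal family of entourages produces $\gamma$, so $F$ witnesses local uniform joinability of $X$ at level $E$. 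The genuine content here is organizing the chain-level lifts into a single generalized path while controlling both its endpoint and its shortness — this is the analog for uniform covering maps of Lemma \ref{ShortGP_Lifts}. One clean way to package it is to note that $f$ already has chain lifting and, from uniqueness of chain lifts, approximate uniqueness of chain lifts, so the only extra property needed to regard it as a generalized uniform covering map and invoke \ref{ShortGP_Lifts} outright is generalized path lifting; its verification through the transverse entourage is precisely where the difficulty lies.
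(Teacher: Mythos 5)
Part 1 of your proposal is correct and coincides with the paper's argument, so nothing to add there.

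For part 2 your setup (pick the uniqueness-of-chain-lifts entourage $F\subset E$, push $(x,y)$ down to $Y$, join $f(x)$ to $f(y)$ by a short generalized path, then lift) is exactly the paper's, but you stop at the one step that carries all the content: producing the lift and showing it ends at $y$. You offer two routes --- lifting homotopies through the induced simplicial maps of Rips complexes, or invoking Lemma \ref{ShortGP_Lifts} --- and explicitly defer both, saying the verification ``is precisely where the difficulty lies.'' That is a genuine gap, and it is one you did not need to leave open: in \cite{Rips} it is already proved that every uniform covering map is a generalized uniform covering map (in particular has generalized path lifting), so Lemma \ref{ShortGP_Lifts} applies to $f$ outright and yields an entourage $K$ of $Y$ such that $K$-short generalized paths lift to $F$-short generalized paths. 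The paper then takes $D\subset K$ witnessing local uniform joinability of $Y$ at level $K$ and works with $f^{-1}(D)\cap F$; no homotopy lifting through Rips complexes occurs.

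The second thing your sketch leaves unresolved is why the lift ends at $y$ rather than at some other point of $f^{-1}(f(y))$; your idea of lifting the homotopy ``starting at the edge $x,y$'' is not justified. The paper pins the endpoint with the uniqueness entourage itself: the lifted generalized path is $F$-short and starts at $x$, so its $F$-term is represented by an $F$-chain starting at $x$ whose image is the chain $f(x),f(y)$; but $x,y$ is another $F$-chain starting at $x$ with that same image, so by uniqueness of chain lifts the $F$-term is the chain $x,y$. Hence the lift ends at $y$ and, being $F$-short with $F\subset E$, is $E$-short. The entourage $F$ thus does double duty --- it forces the endpoint and supplies the shortness --- and this closing observation is what your proposal is missing.
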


\proof
\textrm{  }
\par\noindent 1. Suppose $x,y\in X$ and $(x,y)\in E$ for each entourage $E$ of $X$. Then $(f(x),f(y))\in f(E)$ for each entourage $E$ and $f$ generates the uniform structure on $Y$ so $f(x)=f(y)$. But there is an entourage of $X$ that is transverse to $f$ so $x=y$.

\vspace{.1in}\noindent 2. Given an entourage $E$ of $X$, there is an entourage $F\subset E$ so that any two $F$-chains in $X$ starting at the same point who have identical images must be equal. By \ref{ShortGP_Lifts} there is an entourage $K$ of $Y$ so that $K$-short uniform generalized paths lift to $F$-short uniform generalized paths. Since $Y$ is locally uniform joinable there is an entourage $D$ of $Y$ so that if $(x,y)\in D$, $x$ and $y$ can be joined by an $K$-short uniform generalized path. Suppose $(x,y)\in f^{-1}(D)\cap F$. Then $(f(x),f(y))\in D$ so $f(x)$ and $f(y)$ can be joined by an $K$-short uniform generalized path. This uniform generalized path lifts to an $F$-short uniform generalized path starting at $x$. But then a representative of the $F$-term of this uniform generalized path is an $F$-chain whose image is $f(x),f(y)$ so the chain must be $x,y$.
\endproof

\begin{theorem}\label{ClassificationUnifCov}
Suppose $X$ is Hausdorff, locally uniform joinable, chain connected, and semilocally simply uniform joinable. Then there is a bijective correspondence between the the conjugacy classes of subgroups of $\check \pi_1(X)$ and uniform covering maps over $X$ with the covering space being chain connected. 
\end{theorem}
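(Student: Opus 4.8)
The plan is to mirror the proof of Theorem~\ref{Classification}, replacing the ingredients for generalized uniform covering maps by their uniform analogues and exploiting the fact that, under the present hypotheses, the distinction between closed subgroups and arbitrary subgroups disappears. Concretely, fix $x_0\in X$. To a uniform covering map $f\colon Z\to X$ with $Z$ chain connected I would assign the conjugacy class of the subgroup $H=f_*(\check\pi_1(Z,z_0))$, where $z_0\in f^{-1}(x_0)$. The first thing to record is that $Z$ is in fact Hausdorff and locally uniform joinable: these properties are inherited from $X$ by the preceding lemma, and together with chain connectedness they place $Z$ in exactly the class of spaces handled earlier. In particular $Z$ is locally uniform joinable and chain connected, hence uniform joinable, so that $\check\pi_1(Z,z_0)$ behaves as required by the lemmas of the previous section.

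The enabling observation, on which everything rests, is that a uniform covering map over $X$ is automatically a generalized uniform covering map. Indeed, generating the uniform structure on $X$ is part of the definition of a uniform covering map; chain lifting is half of unique chain lifting; and approximate uniqueness of chain lifts is immediate from uniqueness of chain lifts, since equal chains are $E$-close. The only genuinely new requirement is generalized path lifting, and I expect this to be the main obstacle. My approach would be to lift a generalized path $\alpha$ in $X$ starting at $f(z)$ term by term: for each entourage $E$ of $Z$ use chain lifting to select an entourage $F\subset E$ of $Z$ with $f(F)$-chains lifting to $E$-chains, lift a representative of $\alpha_{f(F)}$ to an $E$-chain starting at $z$, and use uniqueness of chain lifts to make the choice canonical. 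The delicate point is coherence: for $F'\subset F$ I must know that the lift of $\alpha_{f(F')}$ is $E$-homotopic rel endpoints to the lift of $\alpha_{f(F)}$. This is a homotopy-lifting statement for chains, and the natural way to obtain it is through the covering maps of Rips complexes induced by $f$, for which the homotopy lifting property of covers of CW complexes applies; alternatively one cites the corresponding development in \cite{Rips}.

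Granting that $f$, and any comparison map $g\colon W\to X$ with $W$ chain connected, are generalized uniform covering maps, the remaining steps are routine. Well-definedness of the assignment (independence of the choice of $z_0\in f^{-1}(x_0)$ up to conjugacy, and invariance under uniform equivalence) follows from Lemma~\ref{ConjugateLemma1ForCech} together with the forward direction of Proposition~\ref{EquivalenceCech}; the uniform joinability of $Z$ needed here has already been arranged. Injectivity of the induced map on uniform-equivalence classes is the converse direction of Proposition~\ref{EquivalenceCech}: two such covering maps with conjugate associated subgroups are uniform equivalent. Surjectivity is Theorem~\ref{ExistenceUCMSforGUP}, which for every subgroup $H$ of $\check\pi_1(X,x_0)$ produces a chain connected, Hausdorff, locally uniform joinable space $GP(X,x_0)/H$ together with a uniform covering map $p_H$ onto $X$ realizing $H$.

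Finally, the reason the statement quantifies over \emph{all} conjugacy classes of subgroups rather than only the closed ones, in contrast with Theorem~\ref{Classification}, is Proposition~\ref{ClosedSubgroups}: since $X$ is semilocally simply uniform joinable, every subgroup of $\check\pi_1(X)$ is complete, hence closed. Thus the closedness hypotheses invoked in \ref{ImageIsH}, \ref{GP(X)/H_Hausdorff}, and \ref{EquivalenceCech} hold automatically, and surjectivity (\ref{ExistenceUCMSforGUP}) together with injectivity (\ref{EquivalenceCech}) yields a bijection between conjugacy classes of subgroups of $\check\pi_1(X)$ and uniform covering maps over $X$ with chain connected covering space. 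The one place where I would be most careful to give full details, rather than appealing to the classical analogy, is the verification of generalized path lifting in the second paragraph, since it is the single ingredient not already packaged in the definition of a uniform covering map.
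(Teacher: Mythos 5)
Your proposal is correct and follows essentially the same route as the paper: identify $f$ with the conjugacy class of $f_*(\check\pi_1(Z,z_0))$, use the inheritance lemma to get that $Z$ is Hausdorff and locally uniform joinable, invoke \ref{ClosedSubgroups} so that closedness of subgroups is automatic, and then appeal to \ref{EquivalenceCech} for well-definedness and injectivity and to \ref{ExistenceUCMSforGUP} for surjectivity. The one point where you go beyond the paper's text is the explicit verification that a uniform covering map is a generalized uniform covering map (needed so that \ref{EquivalenceCech} applies); the paper simply treats $f$ as a generalized uniform covering map from the outset, implicitly relying on that fact from \cite{Rips}, and your sketch of generalized path lifting via homotopy lifting in the Rips complexes is precisely the standard argument there.
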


\proof
Let $x_0\in X$ and $f:Z\to X$ be a generalized uniform covering map where $Z$ is chain connected. Notice $Z$ is Hausdorff and locally uniform joinable by the previous lemma. We identify $f$ with the conjugacy class of the subgroup $H=f_*(\check\pi_1(Z,z_0))$ where $z_0\in f^{-1}(x_0)$. Notice this subgroup is closed by \ref{ClosedSubgroups}. This identification is well defined and bijective by \ref{EquivalenceCech}. The identification is surjective by \ref{ExistenceUCMSforGUP}. 
\endproof

\section{Uniform covering maps relative to subgroups of the fundamental group}

Now we consider the case of $X$ being path connected, uniformly locally path connected, and uniformly semilocally simply connected. Existence of uniform covering maps follows immediately since in this case, $GP(X,x_0)$ is uniformly equivalent to $\widetilde X$ and $\check\pi_1(X)$ is isomorphic to $\pi_1(X)$\cite{Rips}.

\begin{theorem} \label{ExistenceUP}
Suppose $X$ is Hausdorff, path connected, uniformly locally path connected, and uniformly semilocally simply connected. Suppose $x_0\in X$. Then for each subgroup $H$ of $\pi_1(X,x_0)$ there is a uniform covering map $p_H:Z\to X$ and a point $z_0\in p_H^{-1}(x_0)$ such that $p_{H*}(\pi_1(Z,z_0))=H$.
\end{theorem}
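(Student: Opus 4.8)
The plan is to reduce this statement to Theorem~\ref{ExistenceUCMSforGUP} by exploiting the comparison between the fundamental group and the uniform fundamental group. Under the stated hypotheses, the results of \cite{Rips} identify $GP(X,x_0)$ with the classical universal cover $\widetilde X$ up to uniform equivalence and supply a natural isomorphism $\Phi_X:\pi_1(X,x_0)\to\check\pi_1(X,x_0)$; in particular $X$ is then locally uniform joinable, chain connected, and semilocally simply uniform joinable, so the hypotheses of Theorem~\ref{ExistenceUCMSforGUP} are met. First I would set $\check H=\Phi_X(H)$, a subgroup of $\check\pi_1(X,x_0)$, and apply Theorem~\ref{ExistenceUCMSforGUP} to obtain a uniform covering map $p_H:Z\to X$ with $Z$ Hausdorff, locally uniform joinable, and chain connected, together with a point $z_0\in p_H^{-1}(x_0)$ satisfying $p_{H*}(\check\pi_1(Z,z_0))=\check H$.

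To transfer this equality from $\check\pi_1$ to $\pi_1$ I would use the commuting square
\[
\begin{array}{ccc}
\pi_1(Z,z_0) & \xrightarrow{\ p_{H*}\ } & \pi_1(X,x_0)\\
\Phi_Z\downarrow & & \downarrow\Phi_X\\
\check\pi_1(Z,z_0) & \xrightarrow{\ p_{H*}\ } & \check\pi_1(X,x_0)
\end{array}
\]
whose commutativity is the naturality of the comparison map $\Phi$ with respect to the uniformly continuous map $p_H$ (both $\pi_1$ and $\check\pi_1$ are functors and $\Phi$ is induced by the inclusion of paths into generalized paths, as in \cite{Rips}). If $\Phi_Z$ is surjective, then $p_{H*}(\Phi_Z(\pi_1(Z,z_0)))=p_{H*}(\check\pi_1(Z,z_0))=\check H=\Phi_X(H)$, and commutativity together with the injectivity of $\Phi_X$ forces $p_{H*}(\pi_1(Z,z_0))=H$, as desired.

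It therefore remains to show that $Z$ itself is path connected, uniformly locally path connected, and uniformly semilocally simply connected, so that $\Phi_Z$ is an isomorphism by \cite{Rips}; this is the part I expect to be the main obstacle. It is the analog, for these three properties, of the inheritance lemma preceding Theorem~\ref{ClassificationUnifCov}: each property is local and should descend through $p_H$ because a uniform covering map is transverse and generates the structure on $X$, hence restricts to a uniform isomorphism on small entourages. Concretely, I would lift the local path structure of $X$ through $p_H$ using chain lifting and the description of $Z$ as a quotient of $GP(X,x_0)$, and I would deduce path connectedness of $Z$ from its chain connectedness together with the fact that, under the uniform equivalence $GP(X,x_0)\simeq\widetilde X$ of \cite{Rips}, the space $Z=GP(X,x_0)/\check H$ is uniformly equivalent to the classical covering space $\widetilde X/H$ corresponding to $H$, which is path connected. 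Once these three properties of $Z$ are established, the argument closes as above.
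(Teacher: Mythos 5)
Your proposal is correct and follows essentially the same route as the paper: identify $\widetilde X$ with $GP(X,x_0)$ via the uniform equivalence from \cite{Rips}, deduce that $X$ is locally uniform joinable, chain connected, and semilocally simply uniform joinable so that Theorem \ref{ExistenceUCMSforGUP} applies, and transport the conclusion through the isomorphism $\pi_1(X)\cong\check\pi_1(X)$. If anything you are more careful than the paper, whose proof simply asserts that the theorem ``follows from \ref{ExistenceUCMSforGUP} since $\check\pi_1(X)$ is isomorphic to $\pi_1(X)$'' without spelling out the naturality square or the verification that $Z$ has the properties needed to identify $\pi_1(Z,z_0)$ with $\check\pi_1(Z,z_0)$.
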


\proof
The natural map from $\widetilde X$ to $GP(X,x_0)$ is a uniform equivalence \cite{Rips} so $GP(X,x_0)\to X$ is a uniform covering map which implies that $X$ is locally uniform joinable, chain connected, and semilocally simply uniform joinable. Therefore the theorem follows from \ref{ExistenceUCMSforGUP} since $\check\pi_1(X)$ is isomorphic to $\pi_1(X)$.
\endproof

The fact that if $X$ is Hausdorff, path connected, uniformly locally path connected, and uniformly semilocally simply connected then $X$ is semilocally simply uniform joinable is nontrivial since it is not true that if $X$ is uniformly semilocally simply connected then $X$ is semilocally simply uniform joinable, even for a path connected space (see example \ref{ExampleNotSSUJ} below). Since it is nontrivial, let us include it as a proposition.

\begin{proposition} \label{UPImpliesGUP}
If $X$ is Hausdorff, path connected, uniformly locally path connected, and uniformly semilocally simply connected then $X$ is semilocally simply uniform joinable.
\end{proposition}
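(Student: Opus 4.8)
The plan is to exhibit a single entourage $E$ for which the projection $\check\pi_1(X,x_0)\to\pi_1(R(X,E),x_0)$ is injective for every basepoint $x_0$; by the equivalent formulation recorded after the definition, this is precisely semilocal simple uniform joinability. Since the standing hypotheses give the isomorphism $\check\pi_1(X)\cong\pi_1(X)$ of \cite{Rips}, realized by sending a loop $\ell$ to the generalized loop whose $E$-term is the class of an $E$-chain subdividing $\ell$, the projection corresponds under this isomorphism to the subdivision homomorphism $\phi_E\colon\pi_1(X,x_0)\to\pi_1(R(X,E),x_0)$. Hence it is enough to find an $E$ for which $\phi_E$ is injective, and I would do this by constructing an explicit left inverse $\psi_E$.

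First I would fix the two scales. Recall that uniform semilocal simple connectivity provides an entourage $D$ such that every loop in $X$ with $D$-bounded image is null-homotopic in $X$; shrink $D$ to a symmetric entourage $D'$ admitting enough self-compositions $D'\circ\cdots\circ D'\subseteq D$. Uniform local path connectivity then provides an entourage $E\subseteq D'$ such that any two $E$-close points $x,y$ can be joined by a path $\sigma_{xy}$ whose image is $D'$-bounded; I fix one such $\sigma_{xy}$ for each $E$-close pair, with the convention $\sigma_{yx}=\sigma_{xy}^{-1}$. These choices are made once, globally, and do not depend on any basepoint.

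I would then define $\psi_E\colon\pi_1(R(X,E),x_0)\to\pi_1(X,x_0)$ on an edge loop $x_0,x_1,\dots,x_n=x_0$ by the class of $\sigma_{x_0x_1}\sigma_{x_1x_2}\cdots\sigma_{x_{n-1}x_n}$, and verify that this is a well-defined homomorphism. Homomorphism and compatibility with the two elementary homotopies of the complex are the only points: a backtrack $\sigma_{xy}\sigma_{yx}=\sigma_{xy}\sigma_{xy}^{-1}$ cancels, and sliding across a $2$-simplex $\{x,y,z\}$ alters the loop by the triangle $\sigma_{xy}\sigma_{yz}\sigma_{zx}$. Because $x,y,z$ are pairwise $E\subseteq D'$-close and each chosen path is $D'$-bounded and contains its endpoints, the entire triangle is $D$-bounded, hence null-homotopic in $X$ by the choice of $D$; so $\psi_E$ is well defined. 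To check $\psi_E\circ\phi_E=\mathrm{id}$, given a loop $\ell$ I would subdivide it (using continuity on the compact interval together with uniform local path connectivity) into points $x_i=\ell(t_i)$ with consecutive points $E$-close and each arc $\ell|_{[t_i,t_{i+1}]}$ staying $D'$-close to $x_i$; then each loop $\ell|_{[t_i,t_{i+1}]}\,\sigma_{x_ix_{i+1}}^{-1}$ is $D$-bounded, hence null-homotopic, and splicing these homotopies yields $\ell\simeq\sigma_{x_0x_1}\cdots\sigma_{x_{n-1}x_n}=\psi_E(\phi_E[\ell])$. Thus $\phi_E$ is injective, the projection is a monomorphism, and since $D$ and $E$ are basepoint-free the same $E$ serves every $x_0$.

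The main obstacle is the well-definedness of $\psi_E$: the content of the proposition is exactly that every $2$-simplex of $R(X,E)$ bounds a null-homotopy in $X$, and this is where the two uniform hypotheses must be dovetailed. Uniform local path connectivity is needed first to keep the edge-paths $\sigma_{xy}$ of controlled diameter, and only then can uniform semilocal simple connectivity be applied to the resulting triangles. Arranging the self-compositions of $D'$ so that triangles are genuinely $D$-bounded, and likewise pairing each arc of $\ell$ with its $\sigma$ inside the simply-connected scale, is the delicate bookkeeping; the remaining verifications are formal.
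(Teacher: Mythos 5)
Your proof is correct in outline and, with the bookkeeping you sketch, would be complete; but it takes a genuinely different route from the paper's. The paper gives no displayed proof of this proposition: it is obtained, inside the proof of Theorem \ref{ExistenceUP}, by citing from \cite{Rips} that under these hypotheses the natural map $\widetilde X\to GP(X,x_0)$ is a uniform equivalence over $X$, so the endpoint map $GP(X,x_0)\to X$ is a uniform covering map, and then invoking Proposition \ref{ExistenceOfUniversalUCMForCech} (implication $1\Rightarrow 3$), where a basic transverse entourage $E_0^*$ instantly kills any generalized loop with trivial $E_0$-term. You instead work combinatorially in the Rips complex, constructing an explicit left inverse $\psi_E$ to the subdivision homomorphism $\phi_E\colon\pi_1(X,x_0)\to\pi_1(R(X,E),x_0)$; this is the standard one-scale nerve comparison, and it has the virtue of making explicit how the two hypotheses dovetail (uniform local path connectivity to choose the edge paths $\sigma_{xy}$ at a controlled diameter, then uniform semilocal simple connectivity to fill the resulting $D$-bounded triangles) and of exhibiting a concrete entourage $E$ witnessing the conclusion. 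Be aware, though, that your reduction still leans on the same external input as the paper, namely the \cite{Rips} isomorphism $\pi_1(X)\cong\check\pi_1(X)$: injectivity of $\phi_E$ transfers to injectivity of the projection $\check\pi_1(X,x_0)\to\pi_1(R(X,E),x_0)$ only because that isomorphism is surjective, i.e., every generalized loop is represented by a genuine loop. A proof independent of that theorem would have to show directly that a generalized loop with trivial $E$-term is trivial, which your construction by itself does not do.
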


\begin{example} \label{ExampleNotSSUJ}
We will consider a subspace of the Hawaiian Earing to show that a path connected uniformly semilocally simply connected space need not be semilocally simply uniform joinable. Let the Hawaiian earring be as in example \ref{ExampleNotClosed}. Obtain a space $X$ by removing, for each $n$, the point of $(0,1/n)$ from circle $C_n$. Notice $X$ is path connected and uniformly semilocally simply connected (in fact $X$ is simply connected). However, $X$ is not semilocally simply uniform joinable. Given an entourage $E$ of $X$, there is an $\epsilon$ so that the basic entourage $E_\epsilon=\{(x,y):d(x,y)\leq \epsilon\}\subset E$. Then, if $n>1/\epsilon$, a generalized path that runs around the circle $C_n$ has trivial $E_\epsilon$ term (and therefore trivial $E$ term) but is not trivial.
\end{example}

Similarly, it is not true that a simply connected space is necessarily simply joinable (consider the circle with one point removed). However, we do have the following.

\begin{proposition} \label{SimplyConnectedImpliesSimplyJoinable}
If $X$ is Hausdorff, path connected, uniformly locally path connected, and simply connected then $X$ is simply uniform joinable.
\end{proposition}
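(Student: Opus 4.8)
The plan is to show directly that for an arbitrary basepoint $x_0\in X$ every generalized loop is trivial, using that a generalized loop $\gamma=\{[\gamma_E]\}_E$ is trivial precisely when each of its $E$-terms is $E$-null-homotopic in the Rips complex $R(X,E)$ (equivalently, when $\gamma$ maps to the identity in $\varprojlim\pi_1(R(X,E),x_0)\cong\check\pi_1(X,x_0)$). Simple connectivity will be fed in through the standard sampling homomorphism $\Phi_E:\pi_1(X,x_0)\to\pi_1(R(X,E),x_0)$ of \cite{BP,Rips}, which sends the class of a loop to the $E$-homotopy class of any sufficiently fine $E$-chain sampled along it; since $\pi_1(X,x_0)$ is trivial, every loop in $X$ samples to an $E$-null-homotopic chain.

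Fix a generalized loop $\gamma$ and an entourage $E$; the goal is to show $\gamma_E$ is $E$-null-homotopic. First I would use uniform local path connectedness to select an entourage $F\subset E$ so that any two $F$-close points can be joined by a path with $E$-bounded image (refining $E$ beforehand if the definition only supplies paths lying in a single ball, so that all points of such a path become pairwise $E$-close). Writing the $F$-chain $\gamma_F$ as $x_0=a_0,a_1,\ldots,a_m=x_0$, I join each consecutive pair $a_i,a_{i+1}$ by such an $E$-bounded path $\sigma_i$ and concatenate to obtain an honest loop $\sigma=\sigma_0\cdots\sigma_{m-1}$ based at $x_0$.

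Because $X$ is simply connected, $[\sigma]=1$ in $\pi_1(X,x_0)$, so $\Phi_E([\sigma])$ is trivial: a fine sampling of $\sigma$ is $E$-null-homotopic in $R(X,E)$. The remaining point is to identify this fine sampling with $\gamma_F$ up to $E$-homotopy. Since each $\sigma_i$ has $E$-bounded image, all of its sample points together with the endpoints $a_i,a_{i+1}$ lie in a common simplex of $R(X,E)$; hence within $R(X,E)$ the fine sampling of $\sigma_i$ collapses rel endpoints to the single edge from $a_i$ to $a_{i+1}$, and the whole fine sampling of $\sigma$ is $E$-homotopic to the coarse chain $a_0,\ldots,a_m=\gamma_F$. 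Therefore $\gamma_F$ is $E$-null-homotopic, and as $F\subset E$ the chain $\gamma_E$ is $E$-homotopic to $\gamma_F$, so $\gamma_E$ is $E$-null-homotopic as well. Since $E$ was arbitrary, $\gamma$ equals the constant generalized loop, and since $x_0$ was arbitrary, $X$ is simply uniform joinable.

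I expect the main obstacle to be the comparison between genuine path homotopy in $X$ and $E$-homotopy of chains in $R(X,E)$ — concretely, pinning down the well-definedness and naturality of the sampling homomorphism $\Phi_E$ and the collapse of each $E$-bounded subpath to a single edge. This is exactly where uniform local path connectedness does its work, since it is what makes the chain $\gamma_F$ realizable by $E$-bounded paths; it is also the step that fails for the punctured circle and for Example \ref{ExampleNotSSUJ}, where $F$-close points straddling a puncture cannot be joined by small paths. Hausdorffness and path connectedness play only supporting roles, so the crux is really the chain-versus-path dictionary supplied by \cite{BP,Rips}.
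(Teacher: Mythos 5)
Your proof is correct, but it takes a genuinely different route from the paper's. The paper argues structurally in two lines: simple connectivity makes $X$ uniformly equivalent to its universal cover $\widetilde X$; path connectedness, uniform local path connectedness, and (trivially) uniform semilocal simple connectedness make $\widetilde X$ uniformly equivalent to $GP(X)$; and $GP(X)$ is simply uniform joinable by \cite[Proposition 4.13]{Rips}, so $X$ is too. You instead compute directly in the Rips complexes: you realize the $F$-chain $\gamma_F$ as a genuine loop $\sigma$ by filling in consecutive links with $E$-bounded paths (this is where uniform local path connectedness enters, and your $E$-versus-$E^2$ caution about paths lying in balls is the right bookkeeping), kill $[\sigma]$ by simple connectivity, push the null-homotopy through the sampling homomorphism $\pi_1(X,x_0)\to\pi_1(R(X,E),x_0)$, and collapse each $E$-bounded subpath to a single edge to recover $\gamma_F$, hence $\gamma_E$, up to $E$-homotopy. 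The two arguments trade black boxes: the paper leans on the uniform equivalences $X\to\widetilde X\to GP(X)$ and the simple uniform joinability of $GP(X)$, while you lean only on the well-definedness and homotopy invariance of the chain-sampling homomorphism, which is indeed supplied by \cite{BP} and \cite{Rips}. Your version is more self-contained and, as you note, shows that the Hausdorff hypothesis is not actually used for this implication; the paper's version is shorter given its earlier machinery and stays consistent with how the surrounding results are derived.
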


\proof
If $X$ is Hausdorff, path connected, uniformly path connected, and simply connected then $X$ is uniformly equivalent to $\widetilde X$. But $X$ is path connected, uniformly locally path connected, and uniformly semilocally simply connected so $\widetilde X$ is uniformly equivalent to $GP(X)$. Since $GP(X)$ is simply uniform joinable \cite[Proposition 4.13]{Rips}, so is $X$.
\endproof

Again, we have the analog to \ref{ClassificationUnifCov} for path connected, uniformly locally path connected, and uniformly semilocally simply connected spaces.

\begin{corollary}
Suppose $X$ is Hausdorff, path connected, uniformly locally path connected, and uniformly semilocally simply connected. Then there is a bijective correspondence between the conjugacy classes of subgroups of $\pi_1(X)$ and  uniform covering maps spaces of $X$.
\end{corollary}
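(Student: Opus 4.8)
The plan is to deduce this corollary directly from Theorem \ref{ClassificationUnifCov} by translating both the hypotheses and the indexing group. The first task is to verify that the hypotheses of the corollary imply those of \ref{ClassificationUnifCov}: that $X$ is Hausdorff (given), locally uniform joinable, chain connected, and semilocally simply uniform joinable. This is exactly what is established at the start of the proof of \ref{ExistenceUP}: the natural map $\widetilde X\to GP(X,x_0)$ is a uniform equivalence, so $GP(X,x_0)\to X$ is a uniform covering map, and by \ref{ExistenceOfUniversalUCMForCech} this forces $X$ to be locally uniform joinable, chain connected, and semilocally simply uniform joinable. The one nonformal ingredient here is that the ordinary $\pi_1$ hypotheses yield semilocally simply uniform joinability, namely \ref{UPImpliesGUP}, which the paper has flagged as nontrivial; everything else in this step is routine once $GP(X,x_0)\to X$ is known to be a uniform covering map.

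With the hypotheses verified, Theorem \ref{ClassificationUnifCov} supplies a bijection between conjugacy classes of subgroups of $\check\pi_1(X)$ and chain connected uniform covering maps over $X$ (the restriction to chain connected covering spaces matching \ref{ClassificationUnifCov}). The second task is to replace $\check\pi_1(X)$ by $\pi_1(X)$. Under the present hypotheses $\check\pi_1(X)$ is isomorphic to $\pi_1(X)$ \cite{Rips}. Any group isomorphism carries subgroups bijectively to subgroups and preserves conjugacy, hence induces a bijection on conjugacy classes of subgroups. Composing this bijection on indexing groups with the correspondence from \ref{ClassificationUnifCov} yields a bijection between conjugacy classes of subgroups of $\pi_1(X)$ and chain connected uniform covering maps over $X$, which is the assertion.

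I do not expect a genuine obstacle, since the two bijections being composed are already in hand. The only points requiring care are bookkeeping. First, one must confirm that the hypotheses really do transfer, the substantive piece being \ref{UPImpliesGUP}. Second, one must check that the isomorphism $\check\pi_1(X)\cong\pi_1(X)$ is applied only to the indexing group, while the set of uniform covering maps over $X$ is left untouched, so that the composite is a genuine bijection onto the same class of covering maps classified in \ref{ClassificationUnifCov}. Under the inherited isomorphism, a uniform covering map $f\colon Z\to X$ is then identified with the conjugacy class in $\pi_1(X)$ corresponding to $f_*(\check\pi_1(Z,z_0))$, and well-definedness and bijectivity follow verbatim from those established in \ref{ClassificationUnifCov}.
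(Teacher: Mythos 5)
Your proposal is correct and follows essentially the same route as the paper: the paper's one-line proof invokes \ref{ExistenceUP} and \ref{EquivalenceCech} together with the isomorphism $\pi_1(X)\cong\check\pi_1(X)$, which is exactly the content you assemble by first verifying the hypotheses of \ref{ClassificationUnifCov} (via \ref{UPImpliesGUP} and the uniform equivalence $\widetilde X\to GP(X,x_0)$, as in the proof of \ref{ExistenceUP}) and then transporting the indexing along the isomorphism. The only difference is packaging — you apply \ref{ClassificationUnifCov} as a black box and compose with the induced bijection on conjugacy classes, while the paper re-runs that theorem's identification argument directly — and both versions rest on the same three ingredients.
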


\proof
As above, it follows from \ref{ExistenceUP} and \ref{EquivalenceCech} since $\pi_1(X)$ is isomorphic to $\check\pi_1(X)$.
\endproof

There remains the question of when $\widetilde X\to X$ (or $\widetilde X/H\to X$ for some subgroup $H$ of $\pi_1(X)$) is a generalized uniform covering map. The following example shows that the analog of \ref{p_HIsAGUCM} need not hold. See the forthcoming \cite{TopUnifStr} for a treatment of when $\widetilde X\to X$ is a generalized uniform covering map.

\begin{example}
Consider the Hawaiian Earring $HE$ again (see \ref{ExampleNotClosed}). Let us see that $\widetilde{HE}\to HE$ does not have approximate uniqueness of chain lifts. Let the basepoint be at the origin $(0,0)$. Given any $0<\delta\leq 1/2$, we will find two $E_\delta^*$-chains in $\widetilde{HE}$ starting at the constant path at the origin with identical images in $HE$ that are not $E_{1/2}^*$-close. Choose $n$ so that $\pi/2\delta<n<\pi/\delta$. For the first chain, let $\alpha_{-1}$ and $\alpha_0$ be the constant path at the origin and for each $i\leq n$ let $\alpha_i$ be the path that starts at the origin and goes counterclockwise around $C_1$ so that it has length $i\delta$. Then $\alpha_n$ goes at least a quarter of the way around $C_1$ but less than half way. For the second chain, again let $\beta_{-1}$ be the constant path at the origin but let $\beta_0$ be the path that goes once around $C_m$ where $m$ is chosen so that $m\geq 1/\delta$. Then for each $i\leq n$ let $\beta_i=\beta_0\alpha_i$.

These two chains are $E_\delta^*$-chains and have identical images in $HE$ but $\alpha_n$ and $\beta_n$ are not $E_{1/2}^*$-close. Indeed $\alpha_n^{-1}\beta_n=\alpha_n^{-1}\beta_0\alpha_n$ cannot be homotoped to a path that is $E_{1/2}$-bounded since the distance between the endpoints of $\alpha_n$ is at least $\sqrt 2/2$.
\end{example}

\end{document}